\numberwithin{equation}{section}
\newcommand{\R}{\mathbb{R}}
\newtheorem{remark}{Remark}
\newtheorem{lemma}{Lemma}
\newtheorem{theorem}{Theorem}
\begin{document}
	
\title[Instability of the peaked traveling wave]{\bf Instability of the peaked traveling wave \\ in a local model for shallow water waves}

\author{F\'{a}bio Natali}
\address[F. Natali]{Departamento de Matem\'{a}tica - Universidade Estadual de Maring\'{a}, Avenida Colombo 5790, CEP 87020-900, Maring\'{a}, PR, Brazil}
\email{fmanatali@uem.br}
	
\author{Dmitry E. Pelinovsky}
\address[D. E. Pelinovsky]{Department of Mathematics and Statistics, McMaster University, Hamilton, Ontario, Canada, L8S 4K1}
\email{pelinod@mcmaster.ca}
	
\author{Shuoyang Wang}
\address[S. Wang]{Department of Mathematics and Statistics, McMaster University, Hamilton, Ontario, Canada, L8S 4K1}
\email{wangs455@mcmaster.ca}
	
\begin{abstract}
	The traveling wave with the peaked profile arises in the limit of the family of traveling waves with the smooth profiles. We study the linear and nonlinear stability of the peaked traveling wave by using a local model for shallow water waves, which is related to the Hunter--Saxton equation. The evolution problem is well-defined in the function space $H^1_{\rm per} \cap W^{1,\infty}$, where we derive the linearized equations of motion and 
study the nonlinear evolution of co-periodic perturbations to the peaked periodic wave by using methods of characteristics. Within the linearized equations, we prove the spectral instability of the peaked traveling wave from the spectrum of the linearized operator in a Hilbert space, which completely covers the closed vertical strip with a specific half-width. Within the nonlinear equations, we prove the nonlinear instability of the peaked traveling wave by showing that the gradient of perturbations grow at the wave peak. By using numerical approximations of the smooth traveling waves and the spectrum of their associated linearized operator, we show that the spectral instability of  the peaked traveling wave cannot be obtained in the limit along the family of the spectrally stable smooth traveling waves. 
\end{abstract}

\maketitle

\section{Introduction}

Instabilities of steadily propagating waves with the periodic profiles on a fluid surface, called Stokes waves, have been recently explored within  Euler's equations in many computational details due to advanced numerical algorithms with high precision and accuracy \cite{DDLS24,DS23,DS24,Lush2}. As the Stokes waves become steeper, they become increasingly unstable with respect to co-periodic perturbations, since the spectral stability problem admits more isolated unstable eigenvalues that bifurcate from the origin due to coalescence of pairs of purely imaginary eigenvalues and splitting into pairs of real eigenvalues \cite{DS23,Lush2}. It is believed that the stability of the limiting Stokes waves with the peaked profile \cite{Amick,Plotnikov,Toland} can be concluded by studying eigenvalues of the spectral stability problem for the Stokes waves with the smooth profiles. A similar cascade of instabilities near the limit to the periodic wave with the maximal height is observed in other nonlocal wave models such as the Whitham equation \cite{Carter}. 

The purpose of this paper is to study the linear and nonlinear instability of the traveling waves with the peaked profile within a local model for evolution of surface water waves:
\begin{equation}
\label{toy-model}
2 c \eta_{tx} = (c^2 - 2 \eta) \eta_{xx} - (\eta_x)^2 + \eta,
\end{equation}
where $\eta = \eta(t,x)$ is the surface elevation and $c > 0$ is the wave speed. The subscripts denote partial derivatives of $\eta$ in $t$ and $x$. We study $2\pi$-periodic solutions in $x$ and denote the $2\pi$-periodic domain as $\mathbb{T}$ so that $\eta(t,x) : \mathbb{R}\times \mathbb{T} \to \mathbb{R}$.

The local model (\ref{toy-model}) without the last term was derived in \cite{HS91} for the dynamics of direction fields and has been referred to as the Hunter--Saxton equation \cite{HZ94}. The same model (\ref{toy-model}) was also discussed in \cite{Alber1,Alber2} in the connection to the high-frequency limit of the Camassa--Holm equation, one of the toy models for the physics of fluids with smooth and peaked traveling waves:
\begin{equation}
\label{CH}
u_{\tau} - u_{\tau \xi \xi} + k u_{\xi} + 3 u u_{\xi} = 2 u_{\xi} u_{\xi \xi} + u u_{\xi \xi \xi}, 
\end{equation}
where $u = u(\tau,\xi)$ is the horizontal velocity and $k > 0$ is the parameter. By using the transformation
\begin{equation}
\label{high-frequency}
u(\tau,\xi) = 2 \eta(t,x), \qquad t = 2c \varepsilon^{-1} \tau, \qquad 
x = \varepsilon^{-1} (\xi - c^2 \tau), \qquad k = \varepsilon^{-2},
\end{equation}
we keep only the leading-order terms at the formal $\mathcal{O}(\varepsilon^{-3})$ order. After integrating the leading-order terms once in $x$ with zero integration constant, the Camassa--Holm equation (\ref{CH}) is reduced to the Hunter--Saxton equation (\ref{toy-model}) in the high-frequency limit $\varepsilon \to 0$.

The particular form of the local model (\ref{toy-model}) was suggested in \cite{LP24} based on the reformulation of Euler's equations after a conformal transformation of the fluid domain with variable surface to a fixed rectangular domain and a formal truncation of the model near the traveling wave, see Appendix A in \cite{LP24}. In this context, $x$ is the horizontal coordinate of the rectangular domain after the conformal transformation and $t$ is the time variable defined in the traveling frame moving with the speed $c$. The traveling waves of Euler's equations correspond to the time-independent solutions of the local model (\ref{toy-model}). The local model (\ref{toy-model}) represents the nonlocal Babenko equation \cite{Babenko} for traveling waves in shallow fluid after a transformation similar 
to the high-frequency limit (\ref{high-frequency}) for the Camassa-Holm equation (\ref{CH}), see Appendix B in \cite{LP24}. 

Integrability of the local model (\ref{toy-model}) was established in \cite{Hone} together with other peaked wave equations such as the reduced Ostrovsky and short-pulse equations. Some traveling wave solutions of these peaked wave equations were studied with Hirota's bilinear method in \cite{Matsuno} and with the dynamical system methods in \cite{LP24}. Local wellposedness in Sobolev spaces for sufficiently smooth solutions has been proven in \cite{Ye}.

The Hunter--Saxton equation (\ref{toy-model}) and the Camassa--Holm equation (\ref{CH}) have the traveling periodic waves with the smooth, peaked, and cusped profiles such that the families of smooth and cusped profiles are 
connected at the limiting wave with the peaked profile \cite{GMNP,Len2,LP24}. 
In the Camassa--Holm equation (\ref{CH}), smooth traveling waves are linearly and nonlinearly stable \cite{CS-02,EJ-24,GMNP,LP-22,Len3,Long}, whereas the peaked traveling waves are linearly and nonlinearly unstable in the $W^{1,\infty}$ norm \cite{LP-21,MP-2021,Natali}, despite the fact that the perturbations do not grow in the $H^1$ norm \cite{CM,CS,Len4,Len5}. In the Hunter--Saxton equation (\ref{toy-model}), the linear stability of the smooth periodic waves was proven in \cite{LP24}. The linear and nonlinear instability of the limiting periodic wave with the peaked profile in $H^1_{\rm per} \cap W^{1,\infty}$ is the main result of the present study. Stability of the cusped traveling waves is an open problem for both models due to the lack of local well-posedness of the initial-value problem in the function spaces to which the cusped profiles belong.

For completeness, we also mention relevant results on the existence and stability of traveling periodic waves in the reduced Ostrovsky equation 
\begin{equation}
\label{rO}
(v_t + v v_x)_x = v,
\end{equation}
which is very similar to the Hunter--Saxton equation (\ref{toy-model}) rewritten in the form 
\begin{equation}
\label{HS}
(2c \eta_t - c^2 \eta_x + 2 \eta \eta_x)_x = \eta + (\eta_x)^2.
\end{equation}
Linear and nonlinear stability of the smooth traveling periodic waves were obtained for the reduced Ostrovsky equation (\ref{rO}) in \cite{GP0,HSS17,JP16}. Uniqueness of the traveling periodic waves with the peaked profiles was shown in \cite{BD19,GP1}, the results of which rule out the existence of the traveling periodic waves with the cusped profiles stated incorrectly in \cite{HSS}. The linear instability of the peaked traveling periodic waves was proven in \cite{GP1,GP2}. 

We note that the spectral stability problem $L\psi = \lambda \psi'$ with a self-adjoint operator $L$ in a Hilbert space, considered in \cite{SS18}, appears naturally for the Hunter--Saxton equation (\ref{HS}) linearized at the traveling periodic waves, with $L$ being a Hessian operator defined by the variational characterization of the traveling periodic waves, see  (\ref{action-toy}) and (\ref{linear-local}) below. In the context of the reduced Ostrovsky equation (\ref{rO}), the same spectral stability problem $L\psi = \lambda \psi'$ with a different Hessian operator $L$ is obtained after the hodograph transformation \cite{HSS17,HSS,SS18}.

In a similar context of  the cubic Novikov equation, smooth traveling waves were found to be linearly and nonlinearly stable \cite{EJL-24}, whereas the peaked traveling waves were shown to be linearly and nonlinearly unstable in the $W^{1,\infty}$ norm \cite{Chen-Pel-21,L-24} despite the perturbations do not grow in the $H^1 \cap W^{1,4}$ norm \cite{Chen-23,Chen}.

We now describe the main results and the organization of the paper. 

Section \ref{sec-2} presents the local well-posedness result in $H^1_{\rm per}(\mathbb{T}) \cap W^{1,\infty}(\mathbb{T})$ suitable for waves with the peaked profiles, see Theorem \ref{theorem-evol} below, as well as the conserved quantities useful in the analysis of stability of traveling waves with both smooth and peaked profiles. 

Section \ref{sec-3} introduces the traveling waves with the smooth and peaked profiles, see Figure \ref{fig-1}. Linearized equations of motion for the traveling waves are derived in Section \ref{sec-34}. For the smooth profiles, the spectral stability problem is equivalent to $L\psi = \lambda \psi'$ with a self-adjoint operator $L$ in a Hilbert space considered in \cite{SS18}, see equation (\ref{linear-local}). For the peaked profiles, the spectral stability problem $L\psi = \lambda \psi'$ becomes singular and the proper linearization is based on the local well-posedness result for the time evolution in  $H^1_{\rm per}(\mathbb{T}) \cap W^{1,\infty}(\mathbb{T})$, see equation (\ref{linear-evolution-final}). For the spectral theory in Hilbert spaces, it is more convenient to consider the linearized operator for the traveling wave with the peaked profile in the class of functions broader than the function space needed for the local well-posedness results. This gives us the linearized operator $A : \mathcal{D} \subset L^2(\mathbb{T}) \to L^2(\mathbb{T})$ given by (\ref{A}) and (\ref{domain-A}). 

Sections \ref{sec-4} and \ref{sec-5} contain the spectral analysis of the linearized operator $A : \mathcal{D} \subset L^2(\mathbb{T}) \to L^2(\mathbb{T})$ and its truncation to the unbounded local differential part $A_0 : \mathcal{D} \subset L^2(\mathbb{T}) \to L^2(\mathbb{T})$. In both cases, we obtain the exact location of the point spectrum and the resolvent set separated by the boundary which belongs to the spectrum, see Theorems \ref{theorem-truncated} and \ref{theorem-full-linear} below. 
Since the spectrum is located in the closed vertical strip symmetrically with respect to $i \mathbb{R}$ with a nonzero half-width of the strip, 
we conclude that the peaked traveling wave is spectrally unstable in a Hilbert space $L^2(\mathbb{T})$. 

The nonlinear instability result for the peaked traveling wave is proven in Section \ref{sec-6}, see Theorem \ref{theorem-nonlinear-instability}, by using the method of characteristics. To define the nonlinear evolution and to use the method of characteristics, we again consider perturbations to the peaked traveling wave in the function space $H^1_{\rm per}(\mathbb{T}) \cap W^{1,\infty}(\mathbb{T})$, which is a subset of the function space where the spectral instability has been proven. As a result, the nonlinear instability result is not trivially concluded from the spectral instability result. 
One of the main difficulties in establishing the nonlinear instability of the peaked traveling waves in the Hilbert space $H^1_{\rm per}(\mathbb{T})$ is that the initial-value problem associated with equation (\ref{HS}) cannot be solved using the semigroup approach for initial data in $H^1_{\rm per}(\mathbb{T})$, compared to initial data in smoother Sobolev spaces for the smooth traveling waves \cite{Ye}. To prove the nonlinear instability of the peaked traveling wave, we show that the $W^{1,\infty}$ norm of the perturbation grows in time. However, we do not know if the $H^1_{\rm per}$ norm of the perturbation grows or stays bounded, compared to the case of the Camassa--Holm equation \cite{CS,Len4}.

Section \ref{sec-7} contains numerical approximations of the periodic waves 
with the smooth profiles and eigenvalues of the corresponding Hessian operator $L$ in the spectral stability problem $L \psi = \lambda \psi'$. We show that the spectral instability of the peaked wave cannot be obtained in the limit along the family of the spectrally stable smooth waves. This further emphasizes that the stability analysis of the smooth and peaked traveling waves is very different from each other.

\section{Evolution and conserved quantities}
\label{sec-2}

Taking the mean value of the local model (\ref{HS}) for the $C^1$-smooth $2\pi$-periodic solutions $\eta \in C^1(\mathbb{R} \times \mathbb{T})$ and integrating by parts yields the constraint 
\begin{equation}
\label{zero-mean-toy}
\oint \left[ \eta + (\partial_x \eta)^2 \right] dx = 0.
\end{equation}
Let $\Pi_0 : L^2(\mathbb{T}) \rightarrow L^2(\mathbb{T}) |_{\{1\}^{\perp}}$ be a projection operator to the periodic functions with zero mean. The local model (\ref{HS}) can be written in the evolution form 
\begin{equation}
\label{local-evolution}
2 c \partial_t \eta = (c^2 - 2 \eta) \partial_x \eta + \Pi_0 \partial_x^{-1} \Pi_0 \left[ (\partial_x \eta)^2 + \eta \right],
\end{equation}
where $\Pi_0 \partial_x^{-1} \Pi_0 : L^2(\mathbb{T}) \rightarrow L^2(\mathbb{T}) |_{\{1\}^{\perp}}$ is uniquely defined on the periodic functions under the zero-mean constraint. The local well-posedness result suitable for solutions with the peaked profiles is given by the following theorem.

\begin{theorem}
	\label{theorem-evol}
	For every $\eta_0 \in H^1_{\rm per}(\mathbb{T}) \cap W^{1,\infty}(\mathbb{T})$, there exist $\tau_0 > 0$ and a unique solution $\eta \in C^0((-\tau_0,\tau_0),H^1_{\rm per}(\mathbb{T}) \cap W^{1,\infty}(\mathbb{T})) \cap C^1((-\tau_0,\tau_0),L^2(\mathbb{T}) \cap L^{\infty}(\mathbb{T}))$ of the evolution equation (\ref{local-evolution}) with $\eta(0,\cdot) = \eta_0$, 
	which is also continuous with respect to the initial data $\eta_0 \in H^1_{\rm per}(\mathbb{T}) \cap W^{1,\infty}(\mathbb{T})$.
\end{theorem}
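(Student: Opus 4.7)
The plan is to treat the evolution equation (\ref{local-evolution}) as a quasilinear transport equation with a non-local lower-order source. Writing $v := \partial_x \eta$ and $a[\eta] := (2\eta - c^2)/(2c)$, the equation becomes
$$
\partial_t \eta + a[\eta] \, \partial_x \eta = F[\eta], \qquad F[\eta] := \frac{1}{2c} \Pi_0 \partial_x^{-1} \Pi_0 \left[ v^2 + \eta \right].
$$
For $\eta \in H^1_{\rm per}(\mathbb{T}) \cap W^{1,\infty}(\mathbb{T})$ one has $v^2 + \eta \in L^\infty \subset L^2$, so $F[\eta]$ is well-defined in $W^{1,\infty}$ with $\partial_x F[\eta] = (2c)^{-1}\Pi_0[v^2 + \eta]$, and the map $\eta \mapsto F[\eta]$ is locally Lipschitz from $H^1 \cap W^{1,\infty}$ into $W^{1,\infty}$. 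Differentiating (\ref{local-evolution}) once in $x$ and using the constraint (\ref{zero-mean-toy}), which implies $\langle \eta \rangle = -\langle v^2\rangle$, the would-be non-local contribution $\Pi_0 v^2 - 2v^2$ collapses to $-v^2$, yielding the companion transport equation with a pointwise (Riccati-type) right-hand side
$$
\partial_t v + a[\eta]\, \partial_x v = \frac{1}{2c}(\eta - v^2).
$$

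The solution is constructed by Picard iteration in the closed ball
$$
\mathcal{B}_{R,\tau_0} := \{ \tilde\eta \in C([-\tau_0,\tau_0]; H^1_{\rm per}(\mathbb{T}) \cap W^{1,\infty}(\mathbb{T})) : \tilde\eta(0,\cdot) = \eta_0,\ \sup_{|t|\le \tau_0}\|\tilde\eta\|_{H^1 \cap W^{1,\infty}} \le R \}.
$$
Given $\tilde\eta \in \mathcal{B}_{R,\tau_0}$, I would define the characteristic flow $X(t;x_0)$ by $\dot X = a[\tilde\eta](t,X)$, $X(0) = x_0$; since $a[\tilde\eta]$ is Lipschitz in $x$, $X(t;\cdot)$ is a bi-Lipschitz diffeomorphism of $\mathbb{T}$, with Jacobian $\exp(\int_0^t \partial_x a[\tilde\eta](s,X)\, ds)$ uniformly comparable to $1$ on $[-\tau_0,\tau_0]$ for $\tau_0$ small. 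The next iterate $(\eta,v)$ is obtained by integrating along $X$ the ODEs $\dot\eta = F[\tilde\eta]$ and $\dot v = (\tilde\eta - v^2)/(2c)$ with data $(\eta_0(x_0), \eta_0'(x_0))$. The Riccati ODE gives an $L^\infty$ bound on $v$ on a time $\tau_0 \sim 1/R$, and $\|F[\tilde\eta]\|_{L^\infty}$ gives one on $\eta$. Pulling back via the Jacobian bound and applying energy identities to both transport equations controls the $L^2$ norms of $\eta$ and $v$. Choosing $R > \|\eta_0\|_{H^1 \cap W^{1,\infty}}$ and then $\tau_0$ small enough, the iteration map preserves $\mathcal{B}_{R,\tau_0}$.

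Contraction follows by comparing two iterates produced from $\tilde\eta_1, \tilde\eta_2 \in \mathcal{B}_{R,\tau_0}$: the differences satisfy linear transport equations whose sources are controlled by $\|\tilde\eta_1 - \tilde\eta_2\|$ in $L^2 \cap L^\infty$ via the local Lipschitz bounds for $a$, $F$, and the Riccati nonlinearity, and Gronwall yields contraction in $C([-\tau_0,\tau_0]; L^2 \cap L^\infty)$ after possibly shrinking $\tau_0$. A standard Bona--Smith-type weak-strong argument upgrades the fixed point to continuity into the strong topology $H^1_{\rm per} \cap W^{1,\infty}$, while the time regularity $\eta \in C^1((-\tau_0,\tau_0), L^2 \cap L^\infty)$ is read off directly from the evolution equation since each term on its right-hand side already lies in that class. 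Uniqueness and continuous dependence on the initial data come from the same difference estimates applied to two solutions.

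The main obstacle is the simultaneous control of the $H^1$ and $W^{1,\infty}$ norms: the Riccati source $-v^2/(2c)$ rules out any clean global-in-time $L^\infty$ bound and forces $\tau_0$ to depend on $\|\eta_0'\|_{L^\infty}$, while the non-local operator $\Pi_0 \partial_x^{-1} \Pi_0$ in $F[\eta]$ prevents a purely pointwise (Lagrangian) treatment. The key simplification that makes the argument close is precisely the cancellation of the non-local $\Pi_0 v^2$ term in the equation for $v$ afforded by the constraint (\ref{zero-mean-toy}); this leaves a clean pointwise Riccati ODE for $v$ along characteristics while confining the non-local term to the equation for $\eta$, so that the two norms become compatible inside the fixed-point iteration.
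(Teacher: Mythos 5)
Your strategy is essentially the paper's own: the published proof merely records that the nonlocal term $\Pi_0 \partial_x^{-1} \Pi_0 \left[ \eta + (\partial_x \eta)^2 \right]$ maps balls of $H^1_{\rm per}(\mathbb{T}) \cap W^{1,\infty}(\mathbb{T})$ boundedly into $H^1_{\rm per}(\mathbb{T}) \cap W^{1,\infty}(\mathbb{T})$ and then invokes the method of characteristics, and your Picard scheme along characteristics, with the Riccati equation for $v=\partial_x\eta$, contraction in the weaker $C^0_t(L^2\cap L^\infty)$ topology and a Bona--Smith upgrade, is a legitimate detailed implementation of that sketch.

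Two points need repair, neither fatal. First, you invoke the constraint (\ref{zero-mean-toy}) to collapse the source in the $v$-equation to the clean form $\frac{1}{2c}(\eta-v^2)$, and you call this cancellation ``the key simplification that makes the argument close.'' But Theorem \ref{theorem-evol} imposes no constraint on $\eta_0$, and inside the iteration the frozen field $\tilde\eta$ in your ball certainly need not satisfy (\ref{zero-mean-toy}); the correct companion equation is $\partial_t v + a[\eta]\,\partial_x v = \frac{1}{2c}\bigl(\eta - v^2\bigr) - \frac{1}{4\pi c}\oint \bigl(\eta + (\partial_x\eta)^2\bigr)\,dx$, with the mean term retained. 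This does not break the argument --- the extra term is a spatial constant controlled by the norms you are already tracking, so the Riccati bound and the contraction estimates go through verbatim --- but the cancellation is neither available in general nor needed, so you should not lean on it. Second, in your scheme $\eta$ and $v$ are propagated by different ODEs along the characteristics of $a[\tilde\eta]$, so $v$ is not the $x$-derivative of $\eta$ at the level of the iterates; at the fixed point you must verify $v=\partial_x\eta$, for instance by checking that the distributional derivative of $\eta$ solves the same linear transport equation as $v$ (with the mean term included, as above) and appealing to uniqueness for that linear problem. With these adjustments your argument fills in exactly the details the paper leaves implicit.
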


\begin{proof}
	The evolution equation (\ref{local-evolution}) is a nonlocal version of the inviscid Burgers equation $2c \partial_t \eta = (c^2 - 2 \eta) \partial_x \eta$. Since 
\begin{align*}
\| \Pi_0 \partial_x^{-1} \Pi_0 \left[ \eta + (\partial_x \eta)^2 \right] \|_{L^2} &\leq \| \eta  + (\partial_x \eta)^2 \|_{L^2} \leq \| \eta \|_{L^2} + \| \partial_x \eta \|_{L^{\infty}} \| \partial_x \eta \|_{L^2}, \\
\| \Pi_0 \partial_x^{-1} \Pi_0 \left[ \eta + (\partial_x \eta)^2 \right] \|_{L^{\infty}} &\leq \| \eta + (\partial_x \eta)^2 \|_{L^1} \leq \sqrt{2\pi} \| \eta \|_{L^2} + \| \partial_x \eta \|^2_{L^2},\\
\| \partial_x \Pi_0 \partial_x^{-1} \Pi_0 \left[ \eta + (\partial_x \eta)^2 \right]  \|_{L^2 \cap L^{\infty}} &\leq \|  \eta + (\partial_x \eta)^2 \|_{L^2 \cap L^{\infty}} \leq \| \eta \|_{L^2 \cap L^{\infty}} + \| \partial_x \eta \|_{L^{\infty}} \| \partial_x \eta \|_{L^2 \cap L^{\infty}},
\end{align*}
the nonlocal term $\Pi_0 \partial_x^{-1} \Pi_0 \left[ \eta + (\partial_x \eta)^2 \right]$ is a bounded operator from a ball in $H^1_{\rm per}(\mathbb{T}) \cap W^{1,\infty}(\mathbb{T})$ to $H^1_{\rm per}(\mathbb{T}) \cap W^{1,\infty}(\mathbb{T})$. Local well-posedness in $H^1_{\rm per}(\mathbb{T}) \cap W^{1,\infty}(\mathbb{T})$ follows by using the method of characteristics. 
\end{proof}

\begin{remark}
	The same argument can be used to establish the local well-posedness 
	of the evolution equation (\ref{local-evolution}) in smooth Sobolev spaces  $H^s_{\rm per}(\mathbb{T})$ with $s > \frac{3}{2}$, see \cite{Ye}. 
	The smooth Sobolev spaces are continuously embedded into the function space $H^1_{\rm per}(\mathbb{T}) \cap W^{1,\infty}(\mathbb{T})$.
\end{remark}

The mass, momentum, and energy conservation of the evolution equation (\ref{local-evolution}) are obtained for the smooth solution $\eta \in C^0((-\tau_0,\tau_0),H^s_{\rm per}(\mathbb{T}))$ with $s > \frac{3}{2}$. Multiplying (\ref{toy-model}) by $\partial_x \eta$, and integrating over the period, yields conservation of the momentum 
\begin{equation}
\label{Q}
Q(\eta) := \frac{1}{2} \oint (\partial_x \eta)^2 dx,
\end{equation}
and in view of the constraint (\ref{zero-mean-toy}), also conservation of the mass 
\begin{equation}
\label{M}
M(\eta) := \oint \eta dx.
\end{equation}
Furthermore, writing (\ref{local-evolution}) in the Hamiltonian form
\begin{equation}
\label{action-toy}
2c \partial_t \eta = - \Pi_0 \partial_x^{-1} \Pi_0 \left[ c^2 Q'(\eta) - H'(\eta) \right], 
\end{equation}
where
\begin{equation}
\label{H}
H(\eta) := \frac{1}{2} \oint \left[ \eta^2 + 2 \eta (\partial_x \eta)^2 \right] dx,
\end{equation}
yields conservation of the energy $H(\eta)$.

\begin{remark}
	Due to integrability of the local model (\ref{toy-model}), higher-order conserved quantities exist. Nevertheless, conservation of $Q(\eta)$, $M(\eta)$, and $H(\eta)$ is sufficient for the existence and stability analysis of the traveling waves with the smooth and peaked profiles. 
\end{remark}

\section{Traveling wave with the smooth and peaked profiles}
\label{sec-3}

A traveling wave with the speed $c$ and the smooth profile $\eta \in C_{\rm per}^{\infty}(\mathbb{T})$ corresponds to the time-independent solution of the local model (\ref{toy-model}) found from the second-order differential equation 
\begin{equation}
\label{TW-eq}
(c^2-2 \eta) \eta'' - (\eta')^2 + \eta = 0, \quad x \in \mathbb{T}. 
\end{equation}
This equation is integrable with the first-order invariant
\begin{equation}
\label{TW-inv}
E(\eta,\eta') = \frac{1}{2} (c^2-2 \eta) (\eta')^2 + \frac{1}{2} \eta^2 = \mathcal{E}, 
\end{equation}
the value of which is independent of $x$.

It was shown in \cite{LP24} that the family of smooth $2\pi$-periodic solutions  $\eta \in C_{\rm per}^{\infty}(\mathbb{T})$  exists for $c \in (1,c_*)$ with $c_* := \frac{\pi}{2 \sqrt{2}}$. The peaked profile  $\eta_* \in C^0_{\rm per}(\mathbb{T}) \cap W^{1,\infty}(\mathbb{T})$ corresponds to $c = c_*$ and is given explicitly as 
\begin{equation}
\label{quadratic}
\eta_*(x) = \frac{1}{16} ( \pi^2 - 4 \pi |x| + 2 x^2), \qquad x \in [-\pi,\pi],
\end{equation}
extended as a $2\pi$-periodic function on $\mathbb{T}$. It is easy to verify the validity of $\eta_*$ in (\ref{quadratic}) as a solution of (\ref{TW-eq}) for $x \in [-\pi,0) \cup (0,\pi]$ with 
$$
\max_{x \in \mathbb{T}} \eta_*(x) = \eta_*(0) = \frac{c_*^2}{2}.
$$ 
The peaked profile $\eta_* \in C^0_{\rm per}(\mathbb{T}) \cap W^{1,\infty}(\mathbb{T})$ corresponds to the marginal value of $\mathcal{E}_c := \frac{c^4}{8}$ in (\ref{TW-inv}), which separates the smooth profiles for $\mathcal{E}\in (0,\mathcal{E}_c)$ and the cusped profiles 
for $\mathcal{E} \in (\mathcal{E}_c,\infty)$. The value of $c = c_*$ is selected by setting the period of the peaked profile to $2\pi$. 
The slope of the peaked profile $\eta_*$ has a finite jump discontinuity at $x = 0$ since 
\begin{equation}
\label{wave-slope}
\eta_*'(x) = -\frac{1}{4} (\pi - |x|) {\rm sign}(x) \qquad x \in [-\pi,\pi],
\end{equation}
which implies that $\eta_*'(0^+) - \eta_*'(0^-) = -\frac{\pi}{2}$. 
By using the Dirac delta distribution $\delta_0$ at $x = 0$, we can express the finite jump discontinuity of $\eta'_*(x)$ as the Dirac delta singularity of the second derivative at $x = 0$:
\begin{equation}
\label{wave-curvature}
\eta_*''(x) = \frac{1}{4} - \frac{\pi}{2} \delta_0, \qquad x \in [-\pi,\pi].
\end{equation}
It can be checked through explicit computations that the periodic solution with the peaked profile (\ref{quadratic}) satisfies the constraint (\ref{zero-mean-toy}).

Figure \ref{fig-1} presents the periodic profiles $\eta$ of the traveling waves for two values of $c$ in $(1,c_*)$ and for $c = c_*$ (left) as well as the dependence of the wave amplitude $\| \eta \|_{L^{\infty}}$ versus $c$ (right). The wave profiles were approximated numerically, see Section \ref{sec-7}. The peaked profile $\eta_*$ is shown by a dashed line on the left panel and the corresponding value $c_*$ is shown by a dashed vertical line on the right panel. The family of periodic waves is continued past $c = c_*$ with the cusped profiles for $c \in (c_*,c_{\infty})$, where $c_{\infty}$ is numerically computed (dashed-dotted line) \cite{LP24}. 

\begin{figure}[htb!]
	\centering
	\includegraphics[width=0.95\textwidth]{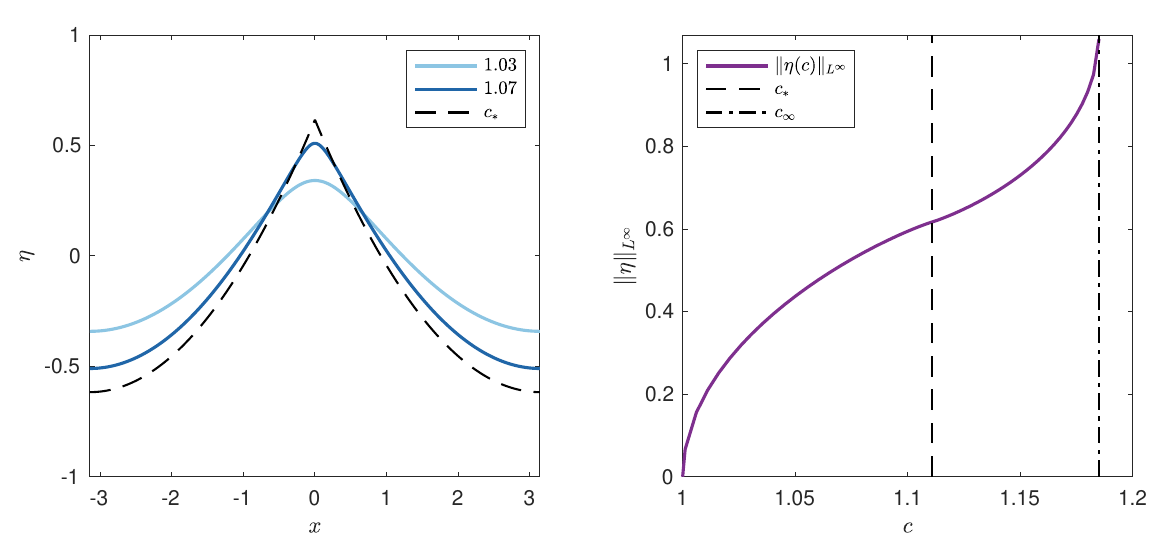} 
	\caption{(a) The solid lines represent the smooth profiles $\eta$ for $c = 1.03, 1.07$. The dashed line represents the peaked profile $\eta_*$ for $c = c_*$. (b) The wave amplitude versus the wave speed $c$ for smooth profiles in $(1,c_*)$ and cusped profiles in $(c_*,c_{\infty})$, where $c_*\approx 1.1107$ (dashed line) and $c_\infty\approx 1.1850$ (dashed-dotted line).}
		\label{fig-1}
\end{figure}

\begin{remark}
	In the context of Babenko's equation \cite{Babenko} for the fluid of infinite depth, it is shown in \cite{LP25} that the peaked profiles with the local behavior as in (\ref{wave-slope}) do not exist after the conformal transformation of the fluid domain. Nevertheless, the existence of the peaked profiles is well established for the local evolution equations such as the Hunter--Saxton equation (\ref{toy-model}), the Camassa--Holm equation (\ref{CH}), and the reduced Ostrovsky equation (\ref{rO}) with $x$ being the horizontal coordinate of the original fluid domain.
\end{remark}

\section{Linearization at the smooth and peaked  traveling waves}
\label{sec-34}

Let $\eta \in C_{\rm per}^{\infty}(\mathbb{T})$ be the spatial profile of the smooth traveling waves for $c \in (1,c_*)$. The second-order equation (\ref{TW-eq}) is equivalent to the Euler--Lagrange equation 
$$
H'(\eta) - c^2 Q'(\eta) = 0. 
$$
Adding a perturbation $\zeta(t,x)$ to $\eta(x)$ and linearizing the evolution equation (\ref{action-toy}), we obtain the linearized equation in the form
\begin{equation}\label{linear-local}
2c\partial_t \zeta = - \Pi_0 \partial_x^{-1} \Pi_0 \mathcal L \zeta, \quad \quad \mathcal L := -\partial_x (c^2-2\eta)\partial_x  + (2\eta'' -1), 
\end{equation}
where $\mathcal L : H_\mathrm{per}^2(\mathbb T) \subset L^2 (\mathbb T)\to L^2(\mathbb T)$ is the Hessian operator for $c^2 Q(\eta) - H(\eta)$ at the profile $\eta \in C_{\rm per}^{\infty}(\mathbb{T})$. 
As $c \to c_*$ and $\eta \to \eta_*  \in C^0_{\rm per}(\mathbb{T}) \cap W^{1,\infty}(\mathbb{T})$, the Hessian operator becomes singular 
since 
$$
2 \eta_*''(x) - 1 = -\frac{1}{2} - \pi \delta_0, \quad x \in [-\pi,\pi].
$$
This suggests that the linearized equation (\ref{linear-local}) breaks at the peaked traveling wave. We need to be careful to linearize the evolution equation (\ref{action-toy}) about the traveling wave with the peaked profile $\eta_*$ by working in the function space $H^1_{\rm per}(\mathbb{T}) \cap W^{1,\infty}(\mathbb{T})$, where the local well-posedness is established by Theorem \ref{theorem-evol}.

To get the proper linearization near the peaked profile  $\eta_*$, we note the following result, which is obtained verbatim from the analysis of \cite{MP-2021,Natali}.

\begin{lemma}\cite{MP-2021,Natali}
	\label{lem-char} 
	Consider a local solution $\eta \in C^0((-\tau_0,\tau_0),H^1_{\rm per}(\mathbb{T}) \cap W^{1,\infty}(\mathbb{T}))$ of Theorem \ref{theorem-evol}, and assume that there exists $\xi(t)$ such that 
	$$
	\lim_{x \to \xi(t)^-} \partial_x \eta(t,x) \neq \lim_{x \to \xi(t)^+} \partial_x \eta(t,x), \quad t \in (-\tau_0,\tau_0).
	$$
	Then, $\xi \in C^1((-\tau_0,\tau_0))$ satisfies 
	\begin{equation}
	\label{char-speed}
	\xi'(t) = -\frac{1}{2c} (c^2 - 2 \eta(t,\xi(t))), \quad t \in (-\tau_0,\tau_0).
	\end{equation}
\end{lemma}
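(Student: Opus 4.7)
The plan is to write the evolution equation (\ref{local-evolution}) as $2c\partial_t \eta = (c^2-2\eta)\partial_x \eta + F[\eta]$ with $F[\eta] := \Pi_0 \partial_x^{-1} \Pi_0[\eta + (\partial_x \eta)^2]$, and to read off the jump condition by taking one-sided limits at $x = \xi(t)$. From the inequalities in the proof of Theorem \ref{theorem-evol}, $F[\eta](t,\cdot) \in H^1_{\rm per}(\mathbb{T}) \cap W^{1,\infty}(\mathbb{T})$, hence is continuous in $x$ by Sobolev embedding; the same is true of $\eta(t,\cdot)$ itself. Thus the only term in the equation that can carry a jump at $\xi(t)$ is $(c^2-2\eta)\partial_x\eta$, and writing $u_\pm(t) := \lim_{x\to\xi(t)^\pm}\partial_x\eta(t,x)$, taking one-sided limits in the equation will give
\[ 2c\,\partial_t \eta(t,\xi(t)^\pm) = (c^2 - 2\eta(t,\xi(t)))\, u_\pm(t) + F[\eta](t,\xi(t)),\]
and subtracting the two produces the jump identity $2c\,[\partial_t\eta] = (c^2 - 2\eta(t,\xi(t)))(u_+(t) - u_-(t))$, where $[g] := g(\xi(t)^+) - g(\xi(t)^-)$.

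\textbf{Upgrading $\xi$ to $C^1$.} Next I would fix $t_0 \in (-\tau_0,\tau_0)$ and consider the characteristic ODE $X'(t) = -(c^2-2\eta(t,X(t)))/(2c)$ with $X(t_0) = \xi(t_0)$. Because $\eta$ is continuous in $t$ and uniformly Lipschitz in $x$ (from the $W^{1,\infty}$ bound given by Theorem \ref{theorem-evol}), Cauchy--Lipschitz yields a unique $C^1$ solution $X(t)$ near $t_0$. I then argue that the jump in $\partial_x \eta$ is transported along this characteristic: on each side of $X(t)$ the equation reduces to a transport equation with the continuous source $F[\eta]/(2c)$, so the one-sided limits $\partial_x\eta(t, X(t)^\pm)$ vary continuously in $t$ and, by the starting inequality $u_+(t_0) \neq u_-(t_0)$, remain unequal near $t_0$. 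Since $\xi$ is assumed to locate the jump and $X$ is another such locus starting from the same point, uniqueness of the characteristic forces $\xi(t) = X(t)$ on a neighborhood of $t_0$, whence $\xi \in C^1((-\tau_0,\tau_0))$.

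\textbf{Deriving (\ref{char-speed}).} Finally, with $\xi \in C^1$ and $\eta$ continuous across $\xi(t)$, the map $t \mapsto \eta(t,\xi(t))$ is $C^1$, and evaluating its derivative from right and left one-sided values will give
\[ \frac{d}{dt}\eta(t,\xi(t)) = \partial_t\eta(t,\xi(t)^\pm) + \xi'(t)\, u_\pm(t),\]
so that subtracting the two expressions yields $[\partial_t\eta] + \xi'(t)(u_+(t)-u_-(t)) = 0$. Combined with the jump identity from the first step and the assumption $u_+(t)-u_-(t) \neq 0$, this gives (\ref{char-speed}). The hardest step will be the $C^1$-regularity argument: the hypothesis only provides $\xi$ pointwise, and I need the Lipschitz structure of the transport part together with the smoothing effect of the nonlocal term $F[\eta]$ to confirm that the jump locus is exactly propagated by the characteristic flow; once this identification is secured, the Rankine--Hugoniot-type computation in the last step is immediate.
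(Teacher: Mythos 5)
Your route — characteristics plus propagation of the gradient jump — is exactly the analysis that the paper imports from \cite{MP-2021,Natali} (the lemma is stated there without proof), so the strategy is the right one; the problem is that its decisive step is asserted rather than proved. The inference ``uniqueness of the characteristic forces $\xi(t)=X(t)$'' is not valid as stated: ODE uniqueness compares two solutions of the characteristic equation, whereas $\xi$ is a priori only a pointwise-defined jump location with no continuity, and the claim that it \emph{is} a characteristic is precisely what has to be shown. What is needed is (i) persistence: the one-sided limits $V_\pm(t):=\partial_x\eta(t,X(t)^\pm)$ stay distinct, and (ii) non-creation: $\partial_x\eta$ remains continuous across every characteristic along which it is initially continuous, so that at each time the only admissible jump point is $X(t)$; only with (i)--(ii) (or with (i) plus uniqueness of the jump point, which the hypothesis does not state but the application via the single-peak decomposition (\ref{decomp}) supplies) does $\xi(t)=X(t)$ follow. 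Both facts come from differentiating (\ref{local-evolution}) in $x$ and reading the result along characteristics: since $\partial_x\Pi_0\partial_x^{-1}\Pi_0 g=\Pi_0 g$, one gets $2c\,\frac{d}{dt}V_\pm=-V_\pm^2+\eta(t,X(t))-\frac{1}{2\pi}\oint\left[\eta+(\partial_x\eta)^2\right]dx$, so the jump $[V]=V_+-V_-$ obeys the \emph{linear} equation $2c\,\frac{d}{dt}[V]=-(V_++V_-)[V]$ with bounded coefficient and can neither vanish nor appear in finite time. Note that your justification — ``a transport equation with the continuous source $F[\eta]/(2c)$'' — refers to the equation for $\eta$, not for $\partial_x\eta$: in the differentiated equation the source $\Pi_0[\eta+(\partial_x\eta)^2]$ is \emph{not} continuous across the curve (the $(\partial_x\eta)^2$ term jumps), and it is the cancellation above, not continuity of the source, that yields persistence. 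Making the $V_\pm$-equations rigorous at the $H^1_{\rm per}\cap W^{1,\infty}$ level is the Lagrangian-coordinate work carried out in \cite{MP-2021,Natali} and mirrored in Section \ref{sec-6} of the paper (compare (\ref{ivp-3})); this is the part your write-up leaves open, and you correctly flag it as the hardest step.

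Two further remarks. First, once $\xi=X$ is established, the $C^1$ regularity and the speed law (\ref{char-speed}) are immediate from the characteristic ODE, so your closing Rankine--Hugoniot computation is redundant; as a standalone derivation it also needs care, because $\partial_t\eta$ itself has one-sided limits at $\xi(t)$ (by the equation), so in the splitting of the difference quotient for $\frac{d}{dt}\eta(t,\xi(t))$ the time-derivative term is one-sided too, and which side appears depends on the sign of $\xi'(t)$; the identity ``with both signs'' is only available once one knows the one-sided traces of $\partial_t\eta$ and $\partial_x\eta$ are attained uniformly near the curve, which again comes out of the Lagrangian analysis rather than preceding it. Second, your first step (the jump identity $2c[\partial_t\eta]=(c^2-2\eta(t,\xi(t)))[\partial_x\eta]$) is fine, but by itself it carries no information about $\xi$; all of the content of the lemma sits in the identification of the jump locus with the characteristic flow.
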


In order to consider a local solution $\eta \in C^0((-\tau_0,\tau_0),H^1_{\rm per}(\mathbb{T}) \cap W^{1,\infty}(\mathbb{T}))$ of the evolution equation 
(\ref{local-evolution}) in a local neighborhood of the traveling wave with the peaked profile (\ref{quadratic}), we define the decomposition 
\begin{equation}
\label{decomp}
\eta(t,x) = \eta_*(x-\xi(t)) + \zeta(t,x-\xi(t)),
\end{equation}
where we assume that the only peak of $\eta(t,\cdot)$ on $\mathbb{T}$ is located at $x = \xi(t)$ for some $t \in (-\tau_0,\tau_0)$. 
The peak location $\xi(t)$ moves with the local characteristic speed of the inviscid Burgers equation, as in Lemma \ref{lem-char}. Substituting 
(\ref{decomp}) into (\ref{local-evolution}) with $c = c_*$ and using (\ref{char-speed}) yields the evolution problem for the perturbation term $\zeta(t,x)$:
\begin{equation}
\label{evolution-perturbation}
2 c_* \partial_t \zeta = (c_*^2 - 2 \eta_*) \partial_x \zeta 
-2 (\zeta - \zeta |_{x = 0}) (\eta_*' + \partial_x \zeta) + \Pi_0 \partial_x^{-1} \Pi_0 \left[ \zeta + 2 \eta_*' \partial_x \zeta + (\partial_x \zeta)^2 \right],
\end{equation}
where we have translated $x - \xi(t)$ into $x$ on $\mathbb{T}$.
Truncation of the evolution equation (\ref{evolution-perturbation}) by the linear terms in $\zeta$ yields the linearized equation 
\begin{equation}
\label{linear-evolution}
2 c_* \partial_t \zeta = (c_*^2 - 2 \eta_*) \partial_x \zeta 
-2 \eta_*' (\zeta - \zeta |_{x = 0}) + \Pi_0 \partial_x^{-1} \Pi_0 \left[ \zeta + 2 \eta_*' \partial_x \zeta  \right],
\end{equation}
subject to the linearized constraint 
\begin{equation}
\label{linear-constraint}
\oint [\zeta + 2 \eta_*' \partial_x \zeta ] dx = 0.
\end{equation}
The next result gives the equivalent form of the linearized evolution. 

\begin{lemma}
	\label{lem-linear}
	The linearized equation (\ref{linear-evolution}) is equivalently written in the form 
\begin{equation}
\label{linear-evolution-final}
2 c_* \partial_t \zeta = (c_*^2 - 2 \eta_*) \partial_x \zeta 
- \frac{1}{\pi} \oint \eta_*' \zeta dx 
+ \frac{1}{2} \Pi_0 \partial_x^{-1} \Pi_0 \zeta,
\end{equation}	
	where both $\oint \zeta dx$ and $\zeta |_{x = 0}$ are constant in $t$ and satisfies the constraint
\begin{equation}
\label{zeta-0}
\zeta |_{x = 0} = -\frac{1}{2\pi} \oint \zeta dx,
\end{equation}	
\end{lemma}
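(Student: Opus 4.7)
The claim is an algebraic reformulation whose only subtlety lies in the distributional structure of $\eta_*$ at $x=0$. The plan is to split the argument into three steps: derive the pointwise constraint (\ref{zeta-0}) from the mean-zero constraint (\ref{linear-constraint}); rewrite the nonlocal term in (\ref{linear-evolution}) into the form appearing in (\ref{linear-evolution-final}); and verify the $t$-independence of $\oint \zeta\,dx$ and $\zeta|_{x=0}$.

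\emph{Step 1.} To get (\ref{zeta-0}) I would split $\oint 2\eta_*'\partial_x\zeta\,dx$ into integrals over $[-\pi,0]$ and $[0,\pi]$, on which $\eta_*'$ is smooth with classical second derivative $\tfrac14$. Integration by parts on each subinterval, combined with $\eta_*'(\pm\pi)=0$ and $\eta_*'(0^\pm)=\mp\pi/4$ from (\ref{wave-slope}), yields $\oint 2\eta_*'\partial_x\zeta\,dx = \pi\zeta(0) - \tfrac12 \oint \zeta\,dx$. Inserting this into (\ref{linear-constraint}) gives (\ref{zeta-0}).

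\emph{Step 2.} For the reformulation, I would first record the distributional identity
\[
2\eta_*'\partial_x\zeta \;=\; \partial_x(2\eta_*'\zeta) - 2\eta_*''\zeta \;=\; \partial_x(2\eta_*'\zeta) - \tfrac12\zeta + \pi\zeta(0)\,\delta_0,
\]
using (\ref{wave-curvature}) and $\zeta\delta_0=\zeta(0)\delta_0$. Applying $\Pi_0\partial_x^{-1}\Pi_0$ with $\partial_x^{-1}\delta_0 = H(x)$ for the Heaviside step function on $[-\pi,\pi]$, and subtracting the resulting mean to enforce the outer $\Pi_0$, I obtain an explicit formula for $\Pi_0\partial_x^{-1}\Pi_0[2\eta_*'\partial_x\zeta]$ containing $2\eta_*'\zeta$, $-\tfrac12\Pi_0\partial_x^{-1}\Pi_0\zeta$, a constant multiple of $\oint\eta_*'\zeta\,dx$, and two corrections $\pi\zeta(0)H(x)-\tfrac{\zeta(0)}{2}x$ together with a constant offset. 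Substituting back into (\ref{linear-evolution}) and expanding $-2\eta_*'(\zeta-\zeta|_{x=0})=-2\eta_*'\zeta+2\eta_*'\zeta|_{x=0}$, the $\zeta(0)$-proportional residual is a multiple of $\pi H(x)-\tfrac{x}{2}-\tfrac{\pi}{2}+2\eta_*'(x)$, which vanishes identically by (\ref{wave-slope}) and $H(x)=\tfrac12(1+\mathrm{sgn}(x))$. What survives is precisely the RHS of (\ref{linear-evolution-final}).

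\emph{Step 3.} Finally, I would integrate (\ref{linear-evolution-final}) over $\mathbb{T}$. The term $\tfrac12\Pi_0\partial_x^{-1}\Pi_0\zeta$ has zero mean, $\oint(c_*^2-2\eta_*)\partial_x\zeta\,dx=2\oint\eta_*'\zeta\,dx$ by integration by parts across the peak (the boundary terms at $x=0$ vanish because $c_*^2-2\eta_*(0)=0$, and the contributions at $x=\pm\pi$ cancel by periodicity), and the constant term contributes $-2\oint\eta_*'\zeta\,dx$. These cancel, so $\partial_t\oint\zeta\,dx=0$. Combined with (\ref{zeta-0}), this gives constancy of $\zeta|_{x=0}$ as well.

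The principal obstacle is the distributional bookkeeping in Step 2: equation (\ref{linear-evolution-final}) looks dramatically simpler than (\ref{linear-evolution}), and the equivalence requires two Dirac masses at $x=0$ (one from $\eta_*''$ and one from the jump of $2\eta_*'$ under differentiation) to conspire with the Heaviside and linear primitives they generate and with the pointwise correction $2\eta_*'\zeta|_{x=0}$ already present in (\ref{linear-evolution}) into an exact cancellation. This cancellation uses the explicit peaked profile (\ref{quadratic}) through (\ref{wave-slope}) in an essential way, and is the real content of the lemma.
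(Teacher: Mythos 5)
Your proposal is correct and, in its core, follows the same route as the paper: the identity $\zeta + 2\eta_*'\partial_x\zeta = 2\partial_x(\eta_*'\zeta) + \tfrac12\zeta + \pi\zeta|_{x=0}\delta_0$ is exactly the paper's starting point, and your real-space cancellation $\pi H(x) - \tfrac{x}{2} - \tfrac{\pi}{2} + 2\eta_*'(x) = 0$ is precisely the paper's key identity $2\eta_*' + \pi\,\Pi_0\partial_x^{-1}\Pi_0\delta_0 = 0$, which the paper verifies instead with Fourier series ($\eta_*' = -\sum_{n\neq 0}\tfrac{1}{4in}e^{inx}$ and $\Pi_0\partial_x^{-1}\Pi_0\delta_0 = \sum_{n\neq 0}\tfrac{1}{2\pi i n}e^{inx}$); the Heaviside-primitive computation you sketch checks out ($\Pi_0\partial_x^{-1}\Pi_0\delta_0 = H(x) - \tfrac{x}{2\pi} - \tfrac12$ on $[-\pi,\pi]$) and is an equally valid, arguably more transparent, way to see the cancellation with the term $2\eta_*'\zeta|_{x=0}$. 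Step 1 is the same integration by parts as in the paper. The one genuine divergence is in Step 3: the paper proves constancy of $\zeta|_{x=0}$ directly from equation (\ref{linear-evolution-final}), by computing $\lim_{x\to 0}\tfrac12\Pi_0\partial_x^{-1}\Pi_0\zeta = \sum_{n\neq 0}\tfrac{\zeta_n}{2in} = \tfrac{1}{\pi}\oint\eta_*'\zeta\,dx$, so that $\lim_{x\to 0}\partial_t\zeta = 0$ without invoking the constraint; you instead deduce it from constancy of $\oint\zeta\,dx$ together with (\ref{zeta-0}). Your shortcut is legitimate only if you grant that the linearized constraint (\ref{linear-constraint}) holds at every time $t$ (as it does, being the linearization of the time-invariant constraint (\ref{zero-mean-toy}) under the decomposition (\ref{decomp})), whereas the paper's direct verification makes the propagation of the constraint a consequence of the linearized equation rather than an input; if you keep your version, state explicitly that (\ref{zeta-0}) is imposed for all $t$, or add the one-line limit computation to close the loop.
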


\begin{proof}
The constraint (\ref{zeta-0}) is obtained by subtituting  (\ref{wave-slope}) into (\ref{linear-constraint}), and integrating the second term in (\ref{linear-constraint}) by parts. To simplify the linearized equation (\ref{linear-evolution}), we use (\ref{wave-curvature}) and write
\begin{equation}
\label{help-1}
\zeta + 2 \eta_*' \partial_x \zeta = 2 \partial_x (\eta_*' \zeta) + \frac{1}{2} \zeta + \pi \delta_0 \zeta.
\end{equation}
This transforms the linearized equation (\ref{linear-evolution}) to the form 
\begin{equation}
\label{linear-evolution-new}
2 c_* \partial_t \zeta = (c_*^2 - 2 \eta_*) \partial_x \zeta 
- \frac{1}{\pi} \oint \eta_*' \zeta dx 
+ 2 \eta_*' \zeta |_{x = 0} + \frac{1}{2} \Pi_0 \partial_x^{-1} \Pi_0 \zeta 
+ \pi \zeta |_{x = 0} \Pi_0 \partial_x^{-1} \Pi_0 \delta_0.
\end{equation}
By using Fourier series, we get 
$$
\delta_0 = \frac{1}{2\pi} \sum_{n \in \mathbb{Z}} e^{i n x},
$$
so that 
$$
\Pi_0 \partial_x^{-1} \Pi_0 \delta_0 = \Pi_0 \partial_x^{-1} \left( \delta_0 -\frac{1}{2\pi} \right) = \sum_{n \in \mathbb{Z}\backslash \{0\}} \frac{1}{2 \pi i n} e^{i n x}.
$$
On the other hand, it follows from (\ref{wave-curvature}) that 
$$
\eta_*''(x) = \frac{1}{4} - \frac{\pi}{2} \delta_0 = -\frac{1}{4} \sum_{n \in \mathbb{Z}\backslash \{0\}} e^{i n x},
$$
which yields
$$
\eta_*'(x) = -\sum_{n \in \mathbb{Z}\backslash \{0\}} \frac{1}{4in} e^{i n x}.
$$
Hence, the two terms with $\zeta |_{x=0}$ in (\ref{linear-evolution-new}) cancels out as
\begin{equation}
\label{help-2}
2 \eta_*' + \pi \Pi_0 \partial_x^{-1} \Pi_0 \delta_0 = 0, 
\end{equation}
and the linearized equation (\ref{linear-evolution-new}) can be written in the form (\ref{linear-evolution-final}). 

Finally, we show that both $\oint \zeta dx$ and $\zeta |_{x = 0}$ are constant in $t$. The conservation of $\oint \zeta dx$ follows by taking the mean value of (\ref{linear-evolution-final}), with the account of the projection term $-\frac{1}{\pi} \oint \eta_*' \zeta dx$. The conservation of $\zeta |_{x = 0}$ is shown by taking the limit $x \to 0$ since if $\zeta = \sum_{n \in \mathbb{Z}} \zeta_n e^{inx}$, then 
\begin{align}
\notag
\frac{1}{\pi} \oint \eta_*' \zeta dx &= \frac{1}{\pi} \sum_{n \in \mathbb{Z}\backslash \{0\}} \zeta_n \oint \eta_*'(x)e^{inx} dx \\
\notag
&= -\frac{1}{\pi}\sum_{n \in \mathbb{Z}\backslash \{0\}} \zeta_n \left( \sum_{m \in \mathbb{Z}\backslash \{0\}} \frac{1}{4im} \oint e^{i(n+m)x} dx \right) \\
&= \sum_{n \in \mathbb{Z} \backslash \{0\}} \frac{\zeta_n}{2in}
\label{help-3}
\end{align}
and 
\begin{align}
\label{help-4}
\lim_{x \to 0} \frac{1}{2} \Pi_0 \partial_x^{-1} \Pi_0 \zeta = \lim_{x \to 0} \frac{1}{2} \sum_{n \in \mathbb{Z}\backslash \{0\}} \zeta_n  \Pi_0 \partial_x^{-1} e^{inx} = \sum_{n \in \mathbb{Z}\backslash \{0\}} \frac{\zeta_n}{2in},
\end{align}
from which it follow that $2c \lim\limits_{x \to 0} \partial_t \zeta(t,x) = 0$ and the value of $\zeta |_{x=0}$ is preserved in time.
\end{proof}

The linearized equation (\ref{linear-evolution-final}) of Lemma \ref{lem-linear} is defined by the linearized operator $A : {\rm Dom}(A) \subset L^2(\mathbb{T}) \to L^2(\mathbb{T})$ given by 
\begin{equation}
\label{A}
A f := (c_*^2 - 2 \eta_*) \partial_x f - \frac{1}{\pi} \oint \eta_*' f dx + \frac{1}{2} \Pi_0 \partial_x^{-1} \Pi_0 f,
\end{equation}
where 
\begin{equation}
\label{domain-A}
{\rm Dom}(A) := \left\{ f \in L^2(\mathbb{T}) : \;\; (c_*^2 - 2 \eta_*) f' \in L^2(\mathbb{T}) \right\} \equiv \mathcal{D}.
\end{equation} 

\begin{remark}
	\label{rem-2}
The local well-posedness result of Theorem \ref{theorem-evol} 
suggests that we should consider the linear operator 
$A : H^1_{\rm per}(\mathbb{T}) \cap W^{1,\infty}(\mathbb{T}) \subset 
L^2(\mathbb{T}) \cap L^{\infty}(\mathbb{T}) \to L^2(\mathbb{T}) \cap L^{\infty}(\mathbb{T})$ with the same definition of $A$ as in (\ref{A}). 
However, for the spectral stability theory, it is more convenient to work in a Hilbert space $L^2(\mathbb{T})$ for which the domain of $A$ is given by 
(\ref{domain-A}). 
\end{remark}

We denote the spectrum of $A : \mathcal{D} \subset L^2(\mathbb{T}) \to L^2(\mathbb{T})$ by $\sigma(A)$. According to the standard definition 
(Definition 6.1.9 in \cite{B}), the spectrum $\sigma(A)$ is further divided into three disjoint sets of the point spectrum $\sigma_p(A)$, the residual spectrum $\sigma_r(A)$, and the continuous spectrum $\sigma_c(A)$ with the resolvent set denoted by $\rho(A) = \mathbb{C} \backslash \sigma(A)$. 

\begin{remark}
By using (\ref{zero-mean-toy}), (\ref{wave-slope}), (\ref{wave-curvature}), and $c_* = \frac{\pi}{2 \sqrt{2}}$, we obtain  
\begin{align*}
A \eta_*' &= (c_*^2 - 2 \eta_*) \eta_*'' - \frac{1}{\pi} \oint (\eta_*')^2 dx + \frac{1}{2} \Pi_0 \eta_* \\
&= -\frac{\pi}{2} (c_*^2 - 2 \eta_*) \delta_0 + \frac{1}{4} (c_*^2 - 2 \eta_*) - \frac{1}{\pi} \oint (\eta_*')^2 dx + \frac{1}{2} \eta_* 
- \frac{1}{4\pi} \oint \eta_* dx \\
&= -\frac{\pi}{2} (c_*^2 - 2 \eta_*) \delta_0 + \frac{\pi^2}{32}  - \frac{3}{4 \pi} \oint (\eta_*')^2 dx \\
&= -\frac{\pi}{2} (c_*^2 - 2 \eta_*) \delta_0,
\end{align*}
so that $A \eta_*' = 0$	in $L^2(\mathbb{T})$. Similarly, we have $\eta_*' \in \mathcal{D}$ so that $0 \in \sigma_p(A)$. However, $\eta_*' \notin C^0_{\rm per}(\mathbb{T})$, hence 
	$\eta_*' \notin H^1_{\rm per}(\mathbb{T}) \cap W^{1,\infty}(\mathbb{T})$. Thus, $H^1_{\rm per}(\mathbb{T}) \cap W^{1,\infty}(\mathbb{T})$ is embedded into $\mathcal{D}$ but is not equivalent to $\mathcal{D}$. The spectral theory of the linear operator  $A : \mathcal{D} \subset L^2(\mathbb{T}) \to L^2(\mathbb{T})$ is developed in a wider space of functions than the space needed for the local well-posedness of the evolution equation  (\ref{local-evolution}).
\end{remark}

\section{Truncated linearized equation}
\label{sec-4}

The following lemma allows us to truncate the linearized operator (\ref{A})--(\ref{domain-A}).

\begin{lemma}
	\label{lem-compact}
The linear operator $K := \frac{1}{2} \Pi_0 \partial_x^{-1} \Pi_0 : L^2(\mathbb{T}) \to L^2(\mathbb{T})$ is a compact (Hilbert--Schmidt) operator.
\end{lemma}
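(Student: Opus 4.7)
The plan is to diagonalize $K$ via Fourier series and show its eigenvalues are square-summable, so that $K$ is Hilbert--Schmidt and in particular compact.

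First I would make explicit the action of $K$ on Fourier modes. For any $f \in L^2(\mathbb{T})$ with Fourier expansion $f = \sum_{n \in \mathbb{Z}} f_n e^{inx}$, the projection $\Pi_0$ kills the $n=0$ mode, and $\partial_x^{-1}$ acts as multiplication by $1/(in)$ on each remaining Fourier mode (producing a zero-mean antiderivative, so the outer $\Pi_0$ is redundant). Hence
\begin{equation*}
Kf \;=\; \tfrac{1}{2}\,\Pi_0 \partial_x^{-1} \Pi_0 f \;=\; \sum_{n \in \mathbb{Z} \setminus \{0\}} \frac{f_n}{2in}\, e^{inx}.
\end{equation*}
The identities (\ref{help-3})--(\ref{help-4}) of the previous section already use this representation, so no further justification is needed.

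Next I would pick the standard orthonormal basis $\phi_n(x) := \frac{1}{\sqrt{2\pi}} e^{inx}$, $n \in \mathbb{Z}$, of $L^2(\mathbb{T})$. By the previous step, $\phi_n$ is an eigenfunction of $K$ with eigenvalue $\mu_n = \frac{1}{2in}$ for $n \neq 0$ and $\mu_0 = 0$. This immediately gives the Hilbert--Schmidt norm
\begin{equation*}
\|K\|_{\mathrm{HS}}^2 \;=\; \sum_{n \in \mathbb{Z}} \|K\phi_n\|_{L^2}^2 \;=\; \sum_{n \in \mathbb{Z}\setminus\{0\}} \frac{1}{4 n^2} \;=\; \frac{\pi^2}{12} \;<\; \infty.
\end{equation*}

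From the finiteness of $\|K\|_{\mathrm{HS}}$ I conclude that $K$ belongs to the Hilbert--Schmidt class on $L^2(\mathbb{T})$, and Hilbert--Schmidt operators are compact. There is no substantive obstacle here; the only thing worth flagging is that the definition of $\partial_x^{-1}$ on zero-mean functions, already used implicitly in the statement of the evolution equation (\ref{local-evolution}), must be the one producing a zero-mean antiderivative, which is precisely the Fourier multiplier by $1/(in)$ that underpins the computation. An alternative route (giving compactness but not the explicit Hilbert--Schmidt norm) would be to observe that $K$ maps $L^2(\mathbb{T})$ into $H^1_{\rm per}(\mathbb{T})$ boundedly because the symbol $\mu_n$ decays like $|n|^{-1}$, and then invoke the compact embedding $H^1_{\rm per}(\mathbb{T}) \hookrightarrow L^2(\mathbb{T})$.
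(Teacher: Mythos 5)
Your proposal is correct and follows essentially the same route as the paper: diagonalize $K$ in the Fourier basis, note the eigenvalues $\frac{1}{2in}$ (the paper records them as $\frac{1}{2n}$, which differs only by the harmless factor $i$), and conclude compactness from square-summability, i.e.\ the Hilbert--Schmidt property. Your explicit computation $\|K\|_{\mathrm{HS}}^2 = \frac{\pi^2}{12}$ and the alternative argument via the compact embedding $H^1_{\rm per}(\mathbb{T}) \hookrightarrow L^2(\mathbb{T})$ are fine but not needed beyond what the paper does.
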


\begin{proof}
By the Fourier theory, we have 
$$
\sigma(K) = \sigma_p(K) = \left\{ \frac{1}{2n}, \;\; n \in \mathbb{Z} \backslash \{0\} \right\}.
$$
Since eigenvalues of $\sigma_p(K)$ are square summable, $K : L^2(\mathbb{T}) \to L^2(\mathbb{T})$ is a compact (Hilbert--Schmidt) operator.
\end{proof}

Using Lemma \ref{lem-compact}, we see that $A = A_0 + K$, where $K$ is a compact perturbation of the unbounded truncated operator $A_0 : {\rm Dom}(A_0) \subset L^2(\mathbb{T}) \to L^2(\mathbb{T})$ given by 
\begin{equation}
\label{A-0}
A_0 f := (c_*^2 - 2 \eta_*) \partial_x f - \frac{1}{\pi} \oint \eta_*'(x) f(x) dx,
\end{equation}
with the same ${\rm Dom}(A_0) = {\rm Dom}(A) = \mathcal{D}$. 
The constraint in the definition of $A_0$ ensures that 
\begin{equation}
	\label{constraint-A}
\oint (A_0f)(x) dx = 0 \quad \mbox{\rm if} \;\; f \in \mathcal{D}.
\end{equation}
The spectrum of $A_0$ can be analyzed similar to the work \cite{GP2}. In fact, since 
$$
c_*^2 - 2 \eta_*(x) = \frac{1}{4} [\pi^2 - (\pi - |x|)^2], \quad x \in [-\pi,\pi],
$$
extended as a $2\pi$-periodic function on $\mathbb{T}$, we define the change of coordinates $x \mapsto z$ by 
\begin{equation}
\label{coord}
\frac{dx}{dz} = \frac{1}{4} x (2\pi - x), \quad x \in [0,2\pi],
\end{equation}
where the interval $[0,2\pi]$ is located between the two consequent peaks on the periodic domain $\mathbb{T}$. Solving the differential equation (\ref{coord}) with $x(0) = \pi$ yields 
\begin{equation}
\label{coord-z}
x(z) = \pi + \pi \tanh\left( \frac{\pi z}{4} \right),
\end{equation}
which is an invertible mapping $\R \ni z \mapsto x \in [0,2\pi]$.
The following lemma shows that the spectrum of the operator 
$A_0 : \mathcal{D}\subset L^2(\mathbb{T}) \to L^2(\mathbb{T})$ can be found from the spectrum of a simpler linear operator defined on the infinite line $\mathbb{R}$.

\begin{lemma}
	\label{lem-reformulation}
	The spectrum of the truncated operator $A_0 : \mathcal{D}\subset L^2(\mathbb{T}) \to L^2(\mathbb{T})$ is equivalent to the spectrum of 
	the linear operator $D_0 : H^1(\R) \subset L^2(\R) \to L^2(\R)$ given by 
	\begin{equation}
	\label{D-0}
	D_0 h := \partial_z h + \frac{\pi}{4} \tanh\left(\frac{\pi z}{4} \right) h + \frac{\pi}{4} w(z) \int_{\R} w'(z) h(z) dz,
	\end{equation}
	where $w(z) := {\rm sech}\left(\frac{\pi z}{4} \right)$. 
	The constraint (\ref{constraint-A}) is equivalent to the constraint $\langle w, D_0 h \rangle = 0$, which holds for every $h \in H^1(\R)$, where $\langle \cdot, \cdot \rangle$ is the standard inner product in $L^2(\R)$ with the induced norm $\| \cdot \|$.
\end{lemma}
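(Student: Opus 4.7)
The plan is to construct a bounded bijection $T : L^2(\mathbb{T}) \to L^2(\R)$ satisfying $T A_0 = D_0 T$ on the respective domains, from which the spectral equivalence $\sigma(A_0) = \sigma(D_0)$ follows immediately by similarity. Motivated by (\ref{coord}), I exploit the identity $c_*^2 - 2\eta_*(x(z)) = \frac{1}{4} x(z)(2\pi - x(z)) = \frac{dx}{dz} = \frac{\pi^2}{4} w(z)^2$, with $w$ as in the lemma statement. This suggests the weighted pullback $(Tf)(z) := f(x(z)) w(z)$, and since $dx = \frac{\pi^2}{4} w(z)^2\, dz$, the change of variables yields $\|Tf\|_{L^2(\R)}^2 = \frac{4}{\pi^2} \|f\|_{L^2(\mathbb{T})}^2$, so $T$ is a bounded bijection with bounded inverse.

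For the local part, the chain rule gives $x'(z)(\partial_x f)(x(z)) = \partial_z[f(x(z))]$; substituting $h = Tf$ (so $f(x(z)) = h(z)/w(z)$) and multiplying by $w(z)$ yields
\begin{equation*}
w(z) \cdot [(c_*^2 - 2\eta_*) \partial_x f](x(z)) = w(z)\, \partial_z\!\left(\frac{h}{w}\right) = h'(z) - \frac{w'(z)}{w(z)} h(z).
\end{equation*}
A direct computation gives $-w'(z)/w(z) = \frac{\pi}{4}\tanh(\pi z/4)$, matching the differential piece of $D_0$ in (\ref{D-0}).

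For the nonlocal piece, the key structural identity is that, under (\ref{coord-z}), $\eta_*'(x(z)) = \frac{1}{4}(x(z)-\pi) = \frac{\pi}{4}\tanh(\pi z/4) = -w'(z)/w(z)$, so $\eta_*'(x(z))w(z) = -w'(z)$. Substituting in the projection integral gives
\begin{equation*}
\oint \eta_*' f\, dx = \frac{\pi^2}{4} \int_\R \eta_*'(x(z)) \frac{h(z)}{w(z)} w(z)^2\, dz = -\frac{\pi^2}{4} \int_\R w'(z) h(z)\, dz,
\end{equation*}
and combining this with the coefficient $-w(z)/\pi$ produced by applying $T$ to the nonlocal term of $A_0$ yields exactly $\frac{\pi}{4} w(z) \int_\R w'(z) h(z)\, dz$, matching (\ref{D-0}). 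The mean-zero constraint (\ref{constraint-A}) transforms via the same substitution into $\frac{\pi^2}{4} \int_\R (D_0 h)(z) w(z)\, dz = 0$, i.e., $\langle w, D_0 h\rangle = 0$.

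Finally, the domains match: the condition $(c_*^2 - 2\eta_*) f' \in L^2(\mathbb{T})$ is equivalent under $T$ to $h' - (w'/w)h \in L^2(\R)$, and since $w'/w$ is bounded on $\R$, this is equivalent to $h' \in L^2(\R)$, that is, $h \in H^1(\R)$. Therefore $T$ maps $\mathcal{D}$ bijectively onto $H^1(\R)$ and intertwines $A_0$ with $D_0$, giving $\sigma(A_0) = \sigma(D_0)$. The main obstacle is careful bookkeeping of the weight $w$; the structural point that makes the coefficients align is the identity $\eta_*'(x(z)) = -w'(z)/w(z)$, which couples the projection integral to the derivative term in exactly the form required by $D_0$.
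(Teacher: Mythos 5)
Your proposal is correct and follows essentially the same route as the paper: the paper performs the same conjugation in two steps, first passing to $g(z)=f(x(z))$ and an intermediate operator $B_0$ on the weighted space $L^2_w(\R)$ and then substituting $h=wg$, while you compose these into the single map $(Tf)(z)=w(z)f(x(z))$. All the key identities you use, including $\eta_*'(x(z))=-w'(z)/w(z)$, the Jacobian $dx/dz=\tfrac{\pi^2}{4}w^2$, and the domain correspondence $\mathcal{D}\leftrightarrow H^1(\R)$, match the paper's computation.
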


\begin{proof}
Using the transformation (\ref{coord-z}), we obtain $A_0 f = B_0 g$, 
where $g(z) = f(x)$ and $B_0 : {\rm Dom}(B_0) \subset L_w^2(\R) \to L_w^2(\R)$ is given by 
\begin{equation}
\label{B-0}
B_0 g := \partial_z g + \frac{\pi}{4} \int_{\R} w(z) w'(z) g(z) dz
\end{equation}
and
$$
{\rm Dom}(B_0) := \left\{ g \in L_w^2(\R) : \quad g' \in L_w^2(\R)  \right\} \equiv H^1_w(\R),
$$ 
with the weight $w(z) := {\rm sech}\left(\frac{\pi z}{4} \right)$. Here the  exponentially weighted spaces $L^2_w(\R)$ and $H^1_w(\R)$ are defined with the squared norms:
$$
\| g \|_{L^2_w}^2 = \int_{\R} w^2(z) |g(z)|^2 dz, \qquad 
\| g \|_{H^1_w}^2 = \int_{\R} w^2(z) \left( |g'(z)|^2 + |g(z)|^2 \right) dz
$$
and the inner product in $L^2_w(\R)$ is defined by 
$$
\langle g_1,g_2 \rangle_{L^2_w} = \int_{\R} w^2(z) g_1(z) g_2(z) dz.
$$
The constraint (\ref{constraint-A}) is equivalent to the constraint $\langle 1, B_0 g \rangle_{L^2_w} = 0$, which holds for every $g \in H^1_w(\R)$.
By using the change of variables $h(z) = w(z) g(z)$, we get 
$B_0 g = w^{-1} D_0 h$ and $\langle 1, B_0 g \rangle_{L^2_w} = \langle w, D_0 h \rangle = 0$, where $D_0 : H^1(\R) \subset L^2(\R) \to L^2(\R)$ is given by (\ref{D-0}). 
\end{proof}

The following theorem prescribes the spectrum of the truncated operator $A_0 : \mathcal{D}\subset L^2(\mathbb{T}) \to L^2(\mathbb{T})$ given by (\ref{A-0}).

\begin{theorem}
	\label{theorem-truncated}
	The spectrum of $A_0 : \mathcal{D} \subset L^2(\mathbb{T}) \to L^2(\mathbb{T})$ completely covers the closed vertical strip given
	by
\begin{equation}
\label{spectrum-truncated}
\sigma(A_0) = \left\{ \lambda \in \mathbb{C} : \quad -\frac{\pi}{4} \leq {\rm Re}(\lambda) \leq \frac{\pi}{4} \right\}.
\end{equation}
\end{theorem}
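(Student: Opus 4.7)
The plan is to invoke Lemma~\ref{lem-reformulation} to reduce the problem to computing $\sigma(D_0)$ for the operator $D_0 : H^1(\R) \subset L^2(\R) \to L^2(\R)$ defined in (\ref{D-0}), and then to exploit the rank-one structure $D_0 = L_0 + K$, where $L_0 h := h' + \tfrac{\pi}{4}\tanh(\pi z/4)\, h$ is an unbounded first-order differential operator with $L_0 w = 0$ and $K h := \tfrac{\pi}{4}\, w(z)\, \langle w', h\rangle$ is bounded and of rank one. The key computational tool throughout is the integrating-factor identity
\[
\frac{d}{dz}\bigl[ w(z)^{-1} e^{-\lambda z} h(z) \bigr] \;=\; w(z)^{-1} e^{-\lambda z}\,(L_0 - \lambda) h,
\]
which turns any equation $(L_0 - \lambda) h = g$ into a direct antiderivative, in the spirit of the treatment used in \cite{GP2}.

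First I would show that every $\lambda$ in the open strip $\{\lambda \in \mathbb{C} : |\mathrm{Re}(\lambda)| < \pi/4\}$ is an eigenvalue of $D_0$. Setting $c := \langle w', h\rangle$ rewrites $(D_0 - \lambda) h = 0$ as $(L_0 - \lambda) h = -\tfrac{\pi c}{4}\, w$, and for $\lambda \neq 0$ the integrating factor produces the explicit two-parameter family
\[
h(z) \;=\; C\, w(z)\, e^{\lambda z} + \frac{\pi c}{4\lambda}\, w(z).
\]
The first summand lies in $L^2(\R)$ precisely because $|\mathrm{Re}(\lambda)| < \pi/4$, and the consistency condition $c = \langle w', h\rangle$, combined with $\langle w', w\rangle = 0$ and the integration-by-parts identity $\langle w',\, w e^{\lambda \cdot}\rangle = -\tfrac{\lambda}{2}\int_\R w^2(z) e^{\lambda z}\, dz$, leaves one linear relation between $c$ and $C$ that always admits a nontrivial solution. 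The point $\lambda = 0$ needs a separate but entirely parallel computation: the antiderivative yields $h = C w - \tfrac{\pi c}{4}\, z w$, and the explicit value $\|w\|^2 = 8/\pi$ makes the consistency relation an identity, producing a two-dimensional eigenspace. Since $\sigma(D_0)$ is closed, the entire closed strip is thereby contained in $\sigma(D_0)$.

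Next I would prove that the complement of the closed strip lies in $\rho(D_0)$. Fix $\lambda$ with $|\mathrm{Re}(\lambda)| > \pi/4$ and rewrite $(D_0 - \lambda) h = f$ as $(L_0 - \lambda) h = f - \tfrac{\pi c}{4}\, w$ with $c = \langle w', h\rangle$. I would first invert $L_0 - \lambda$ on $L^2(\R)$ by choosing the antiderivative that integrates from $+\infty$ when $\mathrm{Re}(\lambda) > \pi/4$ and from $-\infty$ when $\mathrm{Re}(\lambda) < -\pi/4$, producing a Volterra integral operator with kernel
\[
K_\lambda(z,s) \;=\; \mp\,\frac{w(z)}{w(s)}\, e^{\lambda(z - s)}\, \mathbf{1}_{\pm(s - z) > 0},
\]
whose $L^2 \to L^2$ boundedness follows from Schur's test once one notes that $|K_\lambda(z,s)|$ decays exponentially in $|z - s|$ at rate $|\mathrm{Re}(\lambda)| - \pi/4 > 0$. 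The identity $(L_0 - \lambda)^{-1} w = -w/\lambda$, which is immediate from $L_0 w = 0$, then collapses the compatibility condition for $c$ to a $1 \times 1$ linear system with coefficient $1 + \tfrac{\pi}{4}\langle w',\, (L_0 - \lambda)^{-1} w\rangle = 1$, using again $\langle w', w\rangle = 0$. Thus $(D_0 - \lambda)^{-1}$ exists as a bounded operator on $L^2(\R)$, so $\lambda \in \rho(D_0)$.

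The main obstacle is the uniform $L^2$-boundedness of the Volterra representation of $(L_0 - \lambda)^{-1}$ outside the closed strip, which is the only step that is not algebraic and requires a careful Schur-type estimate on the exponentially decaying kernel $K_\lambda$. The rank-one perturbation does not complicate this step, thanks to the fortunate cancellation $\langle w', w\rangle = 0$. Combined with the eigenvalue construction of the previous step and invocation of Lemma~\ref{lem-reformulation}, this identifies $\sigma(A_0) = \sigma(D_0)$ with the closed vertical strip in (\ref{spectrum-truncated}).
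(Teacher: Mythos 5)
Your proposal is correct, and it reaches the same intermediate targets as the paper: reduction to $D_0$ via Lemma \ref{lem-reformulation}, identification of the open strip as point spectrum through explicit eigenfunctions, and identification of the exterior of the closed strip as resolvent set, with closedness of the spectrum filling in the boundary. The point-spectrum half is essentially the paper's computation in different clothing: your family $h = C\, w e^{\lambda z} + \tfrac{\pi c}{4\lambda} w$ (and $h = Cw - \tfrac{\pi c}{4} z w$ at $\lambda = 0$) coincides with the paper's eigenfunctions, which are obtained instead by the substitution $h = \tilde h w$ that removes the $\tanh$ term; your consistency relation via $\langle w', w\rangle = 0$ and $\langle w', w e^{\lambda\cdot}\rangle = -\tfrac{\lambda}{2}\langle w^2, e^{\lambda\cdot}\rangle$ plays the role of the paper's verification that $\tfrac{\pi}{8}\|w\|^2 = 1$ makes the orthogonality condition automatic. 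Where you genuinely diverge is the resolvent set: the paper derives an a priori bound $\|h\| \leq C(\lambda)\|f\|/(|{\rm Re}(\lambda)| - \tfrac{\pi}{4})$ by pairing the equation with $\bar h$, using $|\tanh| \leq 1$ and the constraint $\bar\lambda \langle h, w\rangle + \langle f, w\rangle = 0$, and then concludes $\lambda \in \rho(D_0)$; you instead build $(L_0-\lambda)^{-1}$ explicitly as a one-sided integral operator whose kernel is dominated by $2 e^{-(|{\rm Re}(\lambda)|-\pi/4)|z-s|}$, prove boundedness by Schur's test, and solve the rank-one correction exactly using $(L_0-\lambda)^{-1} w = -w/\lambda$ and $\langle w', w\rangle = 0$. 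Your route costs a kernel estimate that the paper avoids, but it buys something the paper's energy estimate does not by itself deliver: an actual construction of the inverse, hence surjectivity of $D_0 - \lambda$, whereas the a priori bound alone only gives injectivity and boundedness of the inverse on the range (the paper passes over this point silently). Either way the conclusion $\sigma(A_0) = \sigma(D_0)$ equal to the closed strip follows as you state.
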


\begin{proof}
	We obtain $\sigma_p(A_0)$ and $\rho(A_0)$ as
\begin{align}
\sigma_p(A_0) &= \left\{ \lambda \in \mathbb{C} : \quad -\frac{\pi}{4} < {\rm Re}(\lambda) < \frac{\pi}{4} \right\}, 
\label{sigma-p-A0} \\
\rho(A_0) &= \left\{ \lambda \in \mathbb{C} : \quad |{\rm Re}(\lambda)| >  \frac{\pi}{4} \right\}.
\label{rho-A0}
\end{align}		
Since $\sigma(A_0)$ is a closed set and $\rho(A_0)$ is an open set, the closure of the open region (\ref{sigma-p-A0}) yields (\ref{spectrum-truncated}). 

\vspace{0.25cm}

\underline{$\sigma_p(A_0)$:} By Lemma \ref{lem-reformulation}, it is equivalent to consider $\sigma_p(D_0)$, where $D_0 : H^1(\R) \subset L^2(\R) \to L^2(\R)$ is given by (\ref{D-0}). Let $h \in H^1(\R)$ be a solution of $D_0 h = \lambda h$ for some $\lambda \in \mathbb{C}$. Then, $h = h(z)$ satisfies 
\begin{equation}
\label{eq-1}
h'(z) + \frac{\pi}{4} \tanh\left(\frac{\pi z}{4} \right) h(z) + \frac{\pi}{4} w(z) \langle w', h \rangle = \lambda h(z), \quad z \in \mathbb{R}, 
\end{equation}	
subject to the orthogonality condition $\lambda \langle w, h \rangle = 0$.
Substitution $h(z) = \tilde{h}(z) w(z)$ reduces (\ref{eq-1}) to the form 
\begin{equation}
\label{eq-2}
\tilde{h}'(z) + \frac{\pi}{4} \langle w w', \tilde{h} \rangle = \lambda \tilde{h}(z), \quad z \in \mathbb{R},
\end{equation}	
subject to the orthogonality condition $\lambda \langle w^2, \tilde{h} \rangle = 0$.

For $\lambda = 0$, the general solution of equation (\ref{eq-2}) is 
$\tilde{h}(z) = c_1 + c_2 z$, where $(c_1,c_2)$ are arbitrary constants. This yields the general solution $h(z) = (c_1 + c_2 z) w(z)$ of equation (\ref{eq-1}) for $\lambda = 0$. Since $h \in H^1(\R)$, then $0 \in \sigma_p(D_0)$. 

For $\lambda \neq 0$, the general solution of equation (\ref{eq-2}) is a scalar multiplier of the particular solution 
\begin{equation}
\label{tilde-h}
\tilde{h}(z) = e^{\lambda z} + \frac{\pi}{4 \lambda} \langle w w', e^{\lambda z} \rangle,
\end{equation}
where the inner product is defined for $|{\rm Re}(\lambda)| < \frac{\pi}{2}$. Since 
$$
\frac{\pi}{8} \| w \|^2 = \frac{1}{2} \int_{\mathbb{R}} {\rm sech}^2(z) dz = 1, 
$$
the orthogonality condition $\lambda \langle w^2, \tilde{h} \rangle = 0$ is satisfied for the solution (\ref{tilde-h}). Integration by parts and transformation back to $h$ yields the solution 
$$
h(z) = e^{\lambda z} w(z) - \frac{\pi}{8} w(z) \langle w^2, e^{\lambda z} \rangle,
$$
which show that $h \in H^1(\R)$ if and only if $|{\rm Re}(\lambda)| < \frac{\pi}{4}$, so that $\sigma_p(A_0) = \sigma_p(D_0)$ is given by (\ref{sigma-p-A0}).

\vspace{0.25cm}

\underline{$\rho(A_0)$:} By Lemma \ref{lem-reformulation}, it is equivalent to consider $\rho(D_0)$. Let $h \in H^1(\R)$ be a solution of $(D_0 - \lambda) h = f$ for some $\lambda \in \mathbb{C}$ and $f \in L^2(\R)$. Then, $h = h(z)$ satisfies 
\begin{equation}
\label{eq-3}
h'(z) + \frac{\pi}{4} \tanh\left(\frac{\pi z}{4} \right) h(z) + \frac{\pi}{4} w(z) \langle w', h \rangle = \lambda h(z) + f(z), \quad z \in \mathbb{R}.
\end{equation}
Since $\langle w, D_0 h \rangle = 0$, we have $\lambda \langle w, h \rangle + \langle w, f \rangle = 0$. 	Multiplying equation (\ref{eq-3}) by $\bar{h}$, integrating over $\R$, adding complex conjugation, and dividing by $2$ yields 
$$
\frac{\pi}{4} \langle \tanh\left(\frac{\pi z}{4} \right) h,h \rangle 
+ \frac{\pi}{4} {\rm Re} \langle h,w \rangle \langle w',h \rangle = {\rm Re}(\lambda) \| h \|^2 + {\rm Re} \langle h,f \rangle.
$$
By Cauchy--Schwarz inequality and the constraint $\bar{\lambda} \langle h,w  \rangle + \langle f,w \rangle = 0$, we obtain 
\begin{align*}
\left( {\rm Re}(\lambda) - \frac{\pi}{4} \right) \| h \|^2 &\leq 
{\rm Re}(\lambda) \| h \|^2 - \frac{\pi}{4} \langle \tanh\left(\frac{\pi z}{4} \right) h,h \rangle \\
&= - {\rm Re} \langle h,f \rangle - {\rm Re} \frac{\pi}{4 \bar{\lambda}} \langle f,w \rangle \langle w',h \rangle \\
&\leq \left(1 + \frac{\pi \| w \| \|w'\|}{4 |\lambda|} \right) \| h \| \| f \|
\end{align*}
and 
\begin{align*}
\left( -{\rm Re}(\lambda) - \frac{\pi}{4} \right) \| h \|^2 &\leq
-{\rm Re}(\lambda) \| h \|^2 + \frac{\pi}{4} \langle \tanh\left(\frac{\pi z}{4} \right) h,h \rangle \\
&= {\rm Re} \langle h,f \rangle + {\rm Re} \frac{\pi}{4 \bar{\lambda}} \langle f,w \rangle \langle w',h \rangle  \\
&\leq \left(1 + \frac{\pi \| w \| \|w'\|}{4 |\lambda|} \right) \| h \| \| f \|.
\end{align*}
This yields the bound 
$$
\| h \| \leq \left( 1 + \| w \| \|w'\| \right) \frac{\| f \|}{|{\rm Re}(\lambda)| - \frac{\pi}{4}}, \quad 
\text{for} \;\; |{\rm Re}(\lambda)| > \frac{\pi}{4}.
$$	
Hence, $\{ \lambda \in \mathbb{C}: |{\rm Re}(\lambda)| > \frac{\pi}{4} \}$ belongs to $\rho(D_0)$, but since $\sigma(D_0)$ is a closed set and $\rho(D_0)$ is an open set, then $\{ \lambda \in \mathbb{C}: |{\rm Re}(\lambda)| > \frac{\pi}{4} \}$ is equivalent to $\rho(D_0)$ in view of the location of $\sigma_p(D_0)$. This completes the proof of $\rho(A_0) = \rho(D_0)$ given by (\ref{rho-A0}). 
\end{proof}

\section{Full linearized evolution}
\label{sec-5}

The full linearized evolution is defined by the linear operator 
$A : \mathcal{D} \subset L^2(\mathbb{T}) \to L^2(\mathbb{T})$ given by 
(\ref{A}). The following theorem prescribes the spectrum of $A$.

\begin{theorem}
	\label{theorem-full-linear}
	The spectrum of $A : \mathcal{D} \subset L^2(\mathbb{T}) \to L^2(\mathbb{T})$ completely covers the closed vertical strip given
	by
	\begin{equation}
	\label{spectrum-A}
	\sigma(A) = \left\{ \lambda \in \mathbb{C} : \quad -\frac{\pi}{4} \leq {\rm Re}(\lambda) \leq \frac{\pi}{4} \right\}.
	\end{equation}
\end{theorem}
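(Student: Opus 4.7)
The plan is to exploit the decomposition $A = A_0 + K$, where $K$ is compact by Lemma \ref{lem-compact} and $\sigma(A_0)$ is already the claimed closed strip by Theorem \ref{theorem-truncated}. By the invariance of the Weyl essential spectrum under compact perturbations of closed operators, $\sigma_{\rm ess}(A) = \sigma_{\rm ess}(A_0)$. From the proof of Theorem \ref{theorem-truncated}, every point of the open strip $\{|{\rm Re}(\lambda)| < \pi/4\}$ is an eigenvalue of $A_0$, none of which is isolated in $\sigma(A_0)$. A standard argument (an isolated eigenvalue at which $A_0 - \lambda$ is Fredholm of index zero must be a pole of the resolvent, by analytic Fredholm theory) forces these eigenvalues into $\sigma_{\rm ess}(A_0)$; by closedness, $\sigma_{\rm ess}(A_0)$ contains the entire closed strip. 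This gives the inclusion $\sigma(A) \supseteq \{|{\rm Re}(\lambda)| \le \pi/4\}$.

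For the reverse inclusion, fix $\lambda \in \mathbb{C}$ with $|{\rm Re}(\lambda)| > \pi/4$. Since $\lambda \in \rho(A_0)$, the factorization
\begin{equation*}
A - \lambda = (A_0 - \lambda)\bigl[I + (A_0 - \lambda)^{-1} K\bigr],
\end{equation*}
in which $(A_0 - \lambda)^{-1} K$ is compact on $L^2(\mathbb{T})$, shows that $A - \lambda$ is Fredholm of index zero. By the Fredholm alternative, $\lambda \in \rho(A)$ if and only if $\ker(A - \lambda) = \{0\}$, so the task reduces to ruling out eigenvalues of $A$ in that region.

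Suppose $A f = \lambda f$ for some $f \in \mathcal{D} \setminus \{0\}$ and $|{\rm Re}(\lambda)| > \pi/4$. Taking the mean value of the equation and using (\ref{constraint-A}) together with $\oint K f\, dx = 0$ forces $\lambda \oint f\, dx = 0$, and hence $\oint f\, dx = 0$ since $\lambda \ne 0$. Taking the $L^2(\mathbb{T})$-inner product with $f$ and extracting the real part yields
\begin{equation*}
{\rm Re}(\lambda)\,\|f\|^2 = {\rm Re}\int_{\mathbb{T}} (c_*^2 - 2\eta_*)\, f'\,\bar f\, dx + {\rm Re}\,\langle Kf, f\rangle,
\end{equation*}
after the projection term drops by $\oint f\,dx = 0$. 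The nonlocal contribution vanishes because $K = \tfrac{1}{2}\Pi_0 \partial_x^{-1}\Pi_0$ maps into $L^2_0(\mathbb{T})$ and is anti-self-adjoint there, so $\langle Kf, f\rangle \in i\mathbb{R}$. A mollification argument justifies integration by parts on $(c_*^2 - 2\eta_*)|f|^2 \in W^{1,1}(\mathbb{T})$, reducing the remaining term to $\int \eta_*'|f|^2\,dx$ via $\partial_x(c_*^2 - 2\eta_*) = -2\eta_*'$. Since $|\eta_*'(x)| = \tfrac{1}{4}(\pi - |x|) \le \pi/4$ almost everywhere on $\mathbb{T}$, this produces $|{\rm Re}(\lambda)| \le \pi/4$, contradicting the hypothesis and completing the proof of (\ref{spectrum-A}).

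The main obstacle is obtaining the sharp half-width $\pi/4$ from the energy estimate. The delicate point is a twin cancellation: the projection term drops out thanks to $\oint f\,dx = 0$ (forced by $\lambda \ne 0$), and $\langle Kf,f\rangle$ is purely imaginary thanks to the anti-self-adjointness of $K$ on mean-zero functions. Without these cancellations the bound would be strictly wider than $\pi/4$ and the Fredholm-perturbation argument would not close all the way to the boundary of the strip.
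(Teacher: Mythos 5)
The second half of your proposal (the inclusion $\sigma(A)\subseteq\{|{\rm Re}(\lambda)|\le\frac{\pi}{4}\}$) is sound: the factorization $A-\lambda=(A_0-\lambda)\bigl[I+(A_0-\lambda)^{-1}K\bigr]$ for $\lambda\in\rho(A_0)$ does give a Fredholm operator of index zero, and your energy estimate ruling out kernel then yields invertibility; this is essentially the paper's resolvent computation, packaged through the Fredholm alternative (arguably a cleaner way to pass from an a priori bound to membership in $\rho(A)$). The only loose end there is the integration by parts: you assert $(c_*^2-2\eta_*)|f|^2\in W^{1,1}(\mathbb{T})$, which requires checking that this product has vanishing one-sided limits at the peak (it does, because a nonzero limit would force $|f|^2\gtrsim 1/x$ near $x=0$, contradicting $f\in L^2(\mathbb{T})$), so a sentence of justification is needed rather than a bare appeal to mollification.

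The genuine gap is in the first half, which is the heart of the theorem. Weyl-type invariance under compact perturbations holds for the Fredholm-defined essential spectra, so to transfer the open strip from $\sigma(A_0)$ to $\sigma(A)$ you must show that for $|{\rm Re}(\lambda)|<\frac{\pi}{4}$ the operator $A_0-\lambda$ fails to be Fredholm of index zero. Your substitute argument --- ``the eigenvalues are not isolated, and an isolated eigenvalue where the operator is Fredholm of index zero is a pole of the resolvent'' --- is a false principle: analytic Fredholm theory only gives a dichotomy on each connected component of the index-zero Fredholm region (either the whole component lies in the spectrum, or the spectrum there is discrete), and the pole/isolated-eigenvalue conclusion requires that component to meet the resolvent set. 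Non-isolated eigenvalues can fill an open set on which the operator is Fredholm of index zero, and such spectrum is not protected against compact perturbations: $S\oplus S^*$, with $S$ the unilateral shift, has the whole open unit disk as non-isolated eigenvalues of index zero, yet it is a rank-one perturbation of the bilateral shift, whose spectrum is only the unit circle. So Theorem \ref{theorem-truncated} plus compactness of $K$ alone does not imply that the strip survives in $\sigma(A)$. The missing ingredient that would rescue your scheme is an index computation: in the variables of Lemma \ref{lem-reformulation}, $D_0-\lambda$ is a rank-one perturbation of $\partial_z+\frac{\pi}{4}\tanh(\frac{\pi z}{4})-\lambda$, whose kernel is one-dimensional and whose cokernel is trivial for $|{\rm Re}(\lambda)|<\frac{\pi}{4}$, so the index equals one and these points do lie in the compactly-stable part of the essential spectrum; but you neither state nor prove this, and it is exactly the nontrivial step. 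The paper avoids the issue altogether by proving the inclusion directly for $A$: it solves the eigenvalue equation (\ref{de-f}) explicitly (the solution $f_1(x)=2\lambda-\pi+x$, the Wronskian formula (\ref{W}), and the endpoint asymptotics of $f_2$) and verifies the mean-zero constraint, showing that the open strip is point spectrum of $A$ itself.
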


\begin{proof}
Again, we obtain $\sigma_p(A)$ and $\rho(A)$ as
\begin{align}
\sigma_p(A) &= \left\{ \lambda \in \mathbb{C} : \quad -\frac{\pi}{4} < {\rm Re}(\lambda) < \frac{\pi}{4} \right\} = \sigma_p(A_0), 
\label{sigma-p-A} \\
\rho(A) &= \left\{ \lambda \in \mathbb{C} : \quad |{\rm Re}(\lambda)| >  \frac{\pi}{4} \right\} = \rho(A_0).
\label{rho-A}
\end{align}		
Since $\sigma(A)$ is a closed set and $\rho(A)$ is an open set, the closure of the open region (\ref{sigma-p-A}) yields (\ref{spectrum-A}).
	
	\vspace{0.25cm}
	
\underline{$\sigma_p(A)$:}	Let $f \in \mathcal{D}$ be a solution of $Af = \lambda f$ for some $\lambda \in \mathbb{C}$. Then, $f = f(x)$ satisfies 
\begin{equation}
\label{de-f}
\frac{1}{4} x(2\pi - x) f'(x) + \frac{1}{4 \pi} \int_{0}^{2\pi} (\pi-x) f(x) dx + \frac{1}{2} \Pi_0 \partial_x^{-1} \Pi_0 f = \lambda f(x), \quad 0 < x < 2\pi.
\end{equation}
subject to the orthogonality condition $\lambda \int_0^{2\pi} f(x) dx = 0$. 

If $f \in \mathcal{D}$, then $f \in L^2(\mathbb{T})$ and $(c_*^2-2\eta_*) f' \in L^2(\mathbb{T})$ so that $\lim\limits_{x \to 0^+} f(x)$ and $\lim\limits_{x \to 2\pi^-} f(x)$ may not be defined. Nevertheless, since $c_*^2 - 2 \eta_*(x) \neq 0$ for $x \in (0,2\pi)$, 
we have $f \in C^0(0,2\pi)$. Bootstrapping arguments for equation (\ref{de-f}) imply that $f \in C^{\infty}(0,2\pi)$. Differentiating (\ref{de-f}) in $x$ yields the second-order differential equation 
\begin{equation}
\label{de-f-second-order}
\frac{1}{4} x(2\pi - x) f''(x) + \frac{1}{2} (\pi - x) f'(x) + \frac{1}{2} f(x) 
- \frac{1}{4\pi} \int_0^{2\pi} f(x) dx = \lambda f'(x), \quad 0 < x < 2\pi.
\end{equation}
If $\lambda \neq 0$, then $\int_0^{2\pi} f(x) dx = 0$ so that equation (\ref{de-f-second-order}) can be rewritten in the form
\begin{equation}
\label{de-f-second-order-new}
\frac{1}{4} x(2\pi - x) f''(x) + \frac{1}{2} (\pi - x) f'(x) + \frac{1}{2} f(x) 
= \lambda f'(x), \quad 0 < x < 2\pi.
\end{equation}
One solution of (\ref{de-f-second-order-new}) is obtained explicitly as $f_1(x) = 2 \lambda - \pi + x$. 
The second linearly independent solution $f_2(x)$ of (\ref{de-f-second-order-new}) is obtained from the Wronskian 
\begin{equation}
\label{Wronskian}
f_1(x) f_2'(x) - f_1'(x) f_2(x) = W(x), 
\end{equation}
where $W(x)$ satisfies the first-order differential equation by Abel's theorem:
\begin{equation} 
\label{Wronskian-eq}
W'(x) = \frac{2 (2 \lambda - \pi + x)}{x(2\pi-x)} W(x).
\end{equation}
Integrating (\ref{Wronskian-eq}) yields 
\begin{equation}
\label{W}
W(x) = \frac{\pi^2}{x (2\pi-x)} \left( \frac{x}{2 \pi - x} \right)^{\frac{2\lambda}{\pi}},
\end{equation}
where the constant of integration has been normalized by the condition $W(\pi) = 1$. It follows from (\ref{Wronskian}) and (\ref{W}) that if $\lambda \neq 0$, then $f_2(x)$ satisfies the following asymptotic limits
\begin{align*}
f_2(x) \sim \left\{ \begin{array}{ll} x^{\frac{2\lambda}{\pi}}, \quad & \lambda \neq \frac{\pi}{2}, \\
1, \quad & \lambda = \frac{\pi}{2} \end{array}
\right. \quad \text{ as } \;\; x \to 0^+
\end{align*}
and 
\begin{align*}
f_2(x) \sim \left\{ \begin{array}{ll} (2\pi - x)^{-\frac{2\lambda}{\pi}}, \quad & \lambda \neq -\frac{\pi}{2}, \\
1, \quad & \lambda = -\frac{\pi}{2} \end{array}
\right. \quad \text{ as } \;\; x \to (2\pi)^-.
\end{align*}
On the other hand, $f_1,f_2 \in C^{\infty}(0,2\pi)$ for every $\lambda \in \mathbb{C}$. 

If $|{\rm Re}(\lambda)| < \frac{\pi}{4}$, then $f_2 \in L^2(\mathbb{T})$ and $(c_*^2-\eta_*) f_2' \in L^2(\mathbb{T})$, so that 
$f(x) = c_1 f_1(x) + c_2 f_2(x)$ belongs to $\mathcal{D}$ for every $(c_1,c_2) \in \R^2$. Satisfying the constraint  $\int_0^{2\pi} f(x) dx = 0$ for $\lambda \neq 0$ yields a one-parameter family of solutions $f \in \mathcal{D}$ of (\ref{de-f}) for every $\lambda \in \sigma_p(A) \backslash \{0\}$, where $\sigma_p(A)$ is given by (\ref{sigma-p-A}). 

If $\lambda = 0$, then equation (\ref{de-f-second-order}) contains a constant term. Without the constant term $-\frac{1}{4\pi} \int_0^{2\pi} f(x) dx$, 
the two homogeneous solutions $f_1,f_2 \in \mathcal{D}$ are given by 
$$
f_1(x) = x - \pi, \quad f_2(x) = \frac{1}{2\pi} (x-\pi) \ln \frac{x}{2\pi - x} - 1, 
$$
However, only $f_1$ satisfies (\ref{de-f-second-order}) since $\int_0^{2\pi} f_1(x) dx = 0$. On the other hand, it is easy to see that $f(x) = 1$ is also a solution of (\ref{de-f-second-order}). Thus, there exists a two-parameter family of solutions $f = c_1 (x-\pi) + c_2 \in \mathcal{D}$ of equation (\ref{de-f}) for $\lambda = 0$, so that $0 \in \sigma_p(A)$.

Finally, if $|{\rm Re}(\lambda)| \geq \frac{\pi}{4}$, then $f_2 \notin L^2(\mathbb{T})$ due to the asymptotic limits as $x \to 0^+$ and $x \to (2\pi)^-$. On the other hand, $\int_0^{2\pi} f_1(x) dx = 4\pi \lambda \neq 0$ for $\lambda \neq 0$, so that there exist no nonzero solutions $f \in \mathcal{D}$ of equation (\ref{de-f}) for every $\lambda \notin \sigma_p(A)$. 
This completes the proof of $\sigma_p(A)$ given by (\ref{sigma-p-A}).

\vspace{0.25cm}

\underline{$\rho(A)$:} Let $f \in \mathcal{D}$ be a solution of the resolvent equation $(A - \lambda) f = g$ for $g \in L^2(\mathbb{T})$ rewritten in the form:
\begin{equation}
\label{resolvent}
(c_*^2 - 2 \eta_*) \partial_x f -\frac{1}{\pi} \oint \eta_*' f dx
+ \frac{1}{2} \Pi_0 \partial_x^{-1} \Pi_0 f - \lambda f = g, \quad x \in \mathbb{T}. 
\end{equation}
Since $\oint Af dx = 0$, we get $-\lambda \oint f dx = \oint g dx$. Taking into account that $\Pi_0 \partial_x^{-1} \Pi_0$ is a skew-adjoint operator in $L^2(\mathbb{T})$, we multiply (\ref{resolvent}) by $\bar{f}$, integrate over $\mathbb{T}$, add a complex conjugate equation, and divide by $2$ to obtain
$$
{\rm Re} \langle f, (c_*^2 - 2 \eta_*) \partial_x f \rangle 
- {\rm Re}(\lambda) \| f \|^2 
-\frac{1}{\pi} {\rm Re} \langle f, 1 \rangle \langle \eta_*', f \rangle  = {\rm Re} \langle f,g \rangle.
$$
%where $\langle \cdot,\cdot \rangle$ is the standard inner product in $L^2(\mathbb{T})$ with the induced norm $\| \cdot \|$. 
Integrating by parts in the first term 
and using $-\bar{\lambda} \langle f, 1 \rangle = \langle g, 1 \rangle$ for $\lambda \neq 0$, we obtain 
$$
\langle \eta_*' f,f \rangle 
- {\rm Re}(\lambda) \| f \|^2 + {\rm Re} \frac{\langle \eta_*', f \rangle \langle g, 1 \rangle}{\pi \bar{\lambda}} = {\rm Re} \langle f,g \rangle.
$$
Since $-\frac{\pi}{4} \leq \eta_*'(x) \leq \frac{\pi}{4}$ for $x \in [0,2\pi]$, we get by Cauchy--Schwarz inequality that 
\begin{align*}
\left( {\rm Re}(\lambda) - \frac{\pi}{4} \right) \| f \|^2 &\leq 
{\rm Re}(\lambda) \| f \|^2 - \langle \eta_*' f,f \rangle \\
&= -{\rm Re} \langle f,g \rangle + {\rm Re} \frac{\langle \eta_*', f \rangle \langle g, 1 \rangle}{\pi \bar{\lambda}} \\
&\leq \left(1 + \frac{\sqrt{2\pi} \| \eta_*' \|}{\pi |\lambda|} \right) \| g \| \| f \|
\end{align*}
and 
\begin{align*}
\left( -{\rm Re}(\lambda) - \frac{\pi}{4} \right) \| f \|^2 &\leq 
-{\rm Re}(\lambda) \| f \|^2 + \langle \eta_*' f,f \rangle \\
&={\rm Re} \langle f,g \rangle - {\rm Re} \frac{\langle \eta_*', f \rangle \langle g, 1 \rangle}{\pi \bar{\lambda}}  \\
&\leq \left(1 + \frac{\sqrt{2\pi} \| \eta_*' \|}{\pi |\lambda|} \right) \| g \| \| f \|.
\end{align*}
This yields the bound 
$$
\| f \| \leq \left(1 + \frac{4 \sqrt{2\pi} \| \eta_*' \|}{\pi^2} \right) \frac{\| g \|}{|{\rm Re}(\lambda)| - \frac{\pi}{4}}, \quad 
\text{for} \;\; |{\rm Re}(\lambda)| > \frac{\pi}{4}.
$$
Hence, $\{ \lambda \in \mathbb{C}: |{\rm Re}(\lambda)| > \frac{\pi}{4} \}$ belongs to $\rho(A)$, but since $\sigma(A)$ is a closed set and $\rho(A)$ is an open set, then $\{ \lambda \in \mathbb{C}: |{\rm Re}(\lambda)| > \frac{\pi}{4} \}$ is equivalent to $\rho(A)$ in view of the location of $\sigma_p(A)$ in (\ref{sigma-p-A}). This completes the proof of $\rho(A)$ given by (\ref{rho-A}). 
\end{proof}

\begin{remark}
	Since the intersections of $\sigma_p(A) \cap \rho(A_0)$ and $\sigma_p(A_0) \cap \rho(A)$ are empty, as follows from (\ref{sigma-p-A0}), (\ref{rho-A0}), (\ref{sigma-p-A}), and (\ref{rho-A}), the result $\sigma(A) = \sigma(A_0)$ also follows by Theorem 1 in \cite{GP2}.
Computations in the proof of Theorem \ref{theorem-full-linear} do not rely on the truncated operator $A_0 : \mathcal{D} \subset L^2(\mathbb{T}) \to L^2(\mathbb{T})$ introduced and studied in Section \ref{sec-4}. We included these computations anyway to emphasize that the linear instability of the peaked traveling wave is induced by the quasilinear part of the inviscid Burgers equation and that the dispersion term of the Hunter--Saxton equation (\ref{HS}) does not play the role. This has been the main property of the linear instability of the peaked traveling wave in the Camassa--Holm equation (\ref{CH}) \cite{LP-21,MP-2021,Natali}, the reduced Ostrovsky equation (\ref{rO}) \cite{GP1,GP2}, and the Novikov equation \cite{Chen-Pel-21,L-24}.
\end{remark}

\section{Nonlinear evolution}
\label{sec-6}

We shall now derive the nonlinear instability result for the peaked traveling wave by using the nonlinear evolution equation (\ref{evolution-perturbation}). With the help of equations (\ref{help-1}) and (\ref{help-2}) in Lemma \ref{lem-linear}, we rewrite the nonlinear evolution equation (\ref{evolution-perturbation}) in the equivalent form:
\begin{equation}
\label{evolution-nonlinear}
2 c_* \partial_t \zeta = (c_*^2 - 2 \eta_*) \partial_x \zeta 
-2 (\zeta - \zeta |_{x = 0}) \partial_x \zeta 
-\frac{1}{\pi} \oint \eta_*' \zeta dx 
+ \frac{1}{2} \Pi_0 \partial_x^{-1} \Pi_0 \left[ \zeta + 2 (\partial_x \zeta)^2 \right].
\end{equation}
After applying the transformation (\ref{decomp}) and integrating by parts using (\ref{wave-curvature}), the original constraint (\ref{zero-mean-toy}) becomes
\begin{align}
0 = \oint \left[ \zeta + 2 \eta_*' \partial_x \zeta + (\partial_x \zeta)^2 \right] dx = \frac{1}{2} \oint \left[ \zeta + 2 (\partial_x \zeta)^2 \right] dx + \pi \zeta|_{x=0}.
\label{constraint-nonlinear}
\end{align}
The following lemma identifies two conserved quantities of the evolution equation (\ref{evolution-nonlinear}), whose sum yields the constraint  (\ref{constraint-nonlinear}). 

\begin{lemma}
	\label{lem-conserved}
	Consider a local solution $\zeta \in C^0((-\tau_0,\tau_0),H^1_{\rm per}(\mathbb{T}) \cap W^{1,\infty}(\mathbb{T}))$ of the evolution equation (\ref{evolution-nonlinear}) for some $\tau_0 > 0$. Then 
	\begin{equation}
	\label{conserved}
	\oint \zeta dx \quad \mbox{\rm and} \quad \zeta|_{x=0} + \frac{1}{\pi} \oint (\partial_x \zeta)^2 dx 
	\end{equation}
	are conserved for $t \in (-\tau_0,\tau_0)$.
\end{lemma}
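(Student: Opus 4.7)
The plan is to establish the two conservation laws in sequence: the first, $\oint \zeta\, dx$, comes from direct integration of (\ref{evolution-nonlinear}) over $\mathbb{T}$, and the second follows by combining the first with the algebraic compatibility constraint (\ref{constraint-nonlinear}), which must hold at every $t \in (-\tau_0, \tau_0)$.

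For the first conservation law, I integrate (\ref{evolution-nonlinear}) in $x$ over $\mathbb{T}$ and check that each contribution on the right-hand side either vanishes or cancels pairwise. The term $\frac{1}{2}\Pi_0 \partial_x^{-1} \Pi_0[\zeta + 2(\partial_x \zeta)^2]$ integrates to zero because $\Pi_0$ annihilates the mean. The $x$-independent term $-\tfrac{1}{\pi}\oint \eta_*' \zeta\, dx$ contributes $-2\oint \eta_*' \zeta\, dx$ after multiplying by the period, which exactly cancels $\oint(c_*^2 - 2\eta_*)\partial_x\zeta\, dx = 2\oint \eta_*' \zeta\, dx$, obtained by integration by parts with no boundary terms by periodicity. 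The remaining nonlinear term evaluates to $-\oint \partial_x(\zeta^2)\, dx + 2\zeta|_{x=0}\oint \partial_x \zeta\, dx = 0$. Hence $\frac{d}{dt}\oint \zeta\, dx = 0$.

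For the second conservation law, I appeal to the constraint (\ref{constraint-nonlinear}), that is,
\[
\tfrac{1}{2}\oint \zeta\, dx + \oint (\partial_x \zeta)^2 dx + \pi \zeta|_{x=0} = 0,
\]
which under the decomposition (\ref{decomp}) is equivalent to (\ref{zero-mean-toy}) for the full solution $\eta$. The latter is automatic from the divergence-form structure of (\ref{HS}), since the mean of a distributional $x$-derivative of a periodic $L^1$ function is zero, so that $\oint[\eta + (\partial_x \eta)^2]\, dx = 0$ for every $t$ where the PDE is satisfied. Rearranging (\ref{constraint-nonlinear}) and using Step 1,
\[
\zeta|_{x=0} + \tfrac{1}{\pi}\oint (\partial_x \zeta)^2 dx = -\tfrac{1}{2\pi}\oint \zeta\, dx,
\]
which is constant in $t$.

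The main obstacle is rigorously justifying (\ref{constraint-nonlinear}) at the regularity $H^1_{\rm per}\cap W^{1,\infty}$, since the equivalence with (\ref{zero-mean-toy}) passes through distributional derivatives of a peaked function. A self-contained alternative avoids the constraint by differentiating directly: taking the limit $x \to 0$ in (\ref{evolution-nonlinear}) yields a formula for $\partial_t \zeta|_{x=0}$ in which the first two terms on the right drop out because both $c_*^2 - 2\eta_*$ and $\zeta - \zeta|_{x=0}$ vanish at $x=0$, while the surviving pair combines via the Fourier identities (\ref{help-2})--(\ref{help-4}) to the Fourier sum of $\Pi_0 \partial_x^{-1}\Pi_0 (\partial_x\zeta)^2$ at $0$. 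Differentiating (\ref{evolution-nonlinear}) in $x$, multiplying by $\partial_x \zeta$, and integrating then produces $\frac{d}{dt}\oint(\partial_x\zeta)^2 dx = -c_*^{-1}\oint \eta_*'(\partial_x\zeta)^2 dx$, whose Fourier evaluation uses the expansion $\eta_*'(x) = -\sum_{n\neq 0}\frac{1}{4in}e^{inx}$ from (\ref{wave-curvature}); the two time derivatives then combine with weights $\pi$ and $1$ into zero. This direct route is longer but avoids the regularity issue inherent in invoking (\ref{zero-mean-toy}).
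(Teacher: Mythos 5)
Your proposal is correct and takes essentially the same approach as the paper: conservation of $\oint \zeta\, dx$ by taking the mean of (\ref{evolution-nonlinear}), and your ``self-contained alternative'' for the second quantity (the limit $x \to 0$ using (\ref{help-2})--(\ref{help-4}) together with $c_* \frac{d}{dt} \oint (\partial_x \zeta)^2 dx = - \oint \eta_*' (\partial_x \zeta)^2 dx$ obtained by differentiating, multiplying by $\partial_x \zeta$, and integrating) is precisely the paper's direct argument (\ref{equality-1})--(\ref{equality-2}). The paper also cites the shortcut through the constraint (\ref{constraint-nonlinear}), so your caution about justifying that constraint at this regularity mirrors the paper's choice to include the direct computation as well.
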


\begin{proof}
	The conservation of $\oint \zeta dx$ is shown by taking the mean value of the evolution equation (\ref{evolution-nonlinear}), while the conservation of $\zeta|_{x=0} + \frac{1}{\pi} \oint (\partial_x \zeta)^2 dx$ follows from the constraint (\ref{constraint-nonlinear}). We can also show the latter conservation directly as follows. For smooth solutions, we differentiate equation (\ref{evolution-nonlinear}) and obtain 
\begin{align}
\notag
2 c_* \partial_t \partial_x \zeta &= (c_*^2 - 2 \eta_*) \partial^2_x \zeta 
-2 \eta_*' \partial_x \zeta - 2 (\zeta - \zeta |_{x = 0}) \partial^2_x \zeta 
- 2(\partial_x \zeta)^2 \\
& \quad + \frac{1}{2} (\zeta + 2 (\partial_x \zeta)^2)  - \frac{1}{4\pi} \oint (\zeta + 2 (\partial_x \zeta)^2) dx
\label{evolution-derivative}
\end{align}	
Multiplying (\ref{evolution-derivative}) by $\partial_x \zeta$ and integrating in $x$ over $\mathbb{T}$ yields
\begin{equation}
\label{equality-1}
c_* \frac{d}{dt} \oint (\partial_x \zeta)^2 dx = - \oint \eta_*' (\partial_x \zeta)^2 dx.
\end{equation}
Furthermore, taking the limit $x \to 0$ in equation (\ref{evolution-nonlinear}) as in (\ref{help-3}) and (\ref{help-4}), we get 
\begin{equation}
\label{equality-2}
2 c_* \lim_{x\to 0} \partial_t \zeta = \lim_{x \to 0} \Pi_0 \partial_x^{-1} \Pi_0 (\partial_x \zeta)^2 = \frac{2}{\pi} \oint \eta_*' (\partial_x \zeta)^2 dx.
\end{equation}
By adding (\ref{equality-1}) divided by $\pi$ and (\ref{equality-2}) divided by $2$, we verify conservation of $\zeta|_{x=0} + \frac{1}{\pi} \oint (\partial_x \zeta)^2 dx$. By Sobolev's embedding of $H^1_{\rm per}(\mathbb{T})$ into $C^0_{\rm per}(\mathbb{T})$, we have well-defined $\zeta|_{x=0} \in C^0(-\tau_0,\tau_0)$. Hence, the conserved quantities (\ref{conserved}) are well-defined for the local solution 
$\zeta \in C^0((-\tau_0,\tau_0),H^1_{\rm per}(\mathbb{T}) \cap W^{1,\infty}(\mathbb{T}))$.
\end{proof}

Based on the conserved quantity $\zeta|_{x=0} + \frac{1}{\pi} \oint (\partial_x \zeta)^2 dx$, we obtain the nonlinear instability of 
the peaked traveling wave given by the following theorem.

\begin{theorem}
	\label{theorem-nonlinear-instability}
	For every $\delta > 0$ there exists $\zeta_0 \in H^1_{\rm per}(\mathbb{T}) \cap W^{1,\infty}(\mathbb{T})$ satisfying
\begin{equation}
\label{bound-initial}
	\| \zeta_0 \|_{H^1_{\rm per}} \leq \delta^2, \quad 	\| \zeta_0 \|_{W^{1,\infty}} \leq \delta,
\end{equation}
such that the unique local solution $\zeta \in C^0((-\tau_0,\tau_0),H^1_{\rm per}(\mathbb{T}) \cap W^{1,\infty}(\mathbb{T}))$ of the evolution equation (\ref{evolution-nonlinear}) with $\zeta|_{t = 0} = \zeta_0$ satisfies 
\begin{equation}
\label{bound-final}
\| \zeta(t_0,\cdot) \|_{W^{1,\infty}} = 1,
\end{equation}
for some $t_0 \in (0,\tau_0)$.
\end{theorem}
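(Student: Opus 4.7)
My strategy is to track the one-sided gradient $U(t) := \lim_{x \to 0^+} \partial_x \zeta(t,x)$ along the stationary characteristic $x = 0$ of (\ref{evolution-nonlinear}) and show that it is driven to $U = 1$ in finite time by a Riccati-type ODE. The coefficient $-(c_*^2 - 2\eta_*(x)) + 2(\zeta(t,x) - \zeta(t,0))$ of $\partial_x \zeta$ in (\ref{evolution-nonlinear}) vanishes identically at $x = 0$, so the characteristic through the origin is stationary, and $U(t)$ is $C^1$ in $t$ provided the method-of-characteristics construction from Theorem \ref{theorem-evol} propagates one-sided $C^1$ regularity at $x=0$ from smooth initial data.

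Differentiating (\ref{evolution-nonlinear}) in $x$ as in (\ref{evolution-derivative}), letting $x \to 0^+$, and using $\eta_*(0) = c_*^2/2$, $\eta_*'(0^+) = -\pi/4$, the algebraic cancellations (\ref{help-3})--(\ref{help-4}), the conserved quantities $M_0 := \oint \zeta_0\, dx$ and $C_0 := \zeta_0(0) + \tfrac{1}{\pi}\oint(\partial_x\zeta_0)^2 dx$ of Lemma \ref{lem-conserved}, together with the consequent identity $\oint (\partial_x \zeta)^2 dx = \pi(C_0 - \zeta(t,0))$, I arrive at the Riccati-type ODE
\begin{equation}
\label{prop-ODE}
2c_* U'(t) = \tfrac{\pi}{2}\, U(t) - U(t)^2 + R(t), \qquad R(t) := \tfrac{C_0}{2} - \tfrac{M_0}{4\pi} - \tfrac{1}{\pi}\oint(\partial_x\zeta(t,x))^2 dx.
\end{equation}
The unperturbed logistic equation $2c_* U' = \tfrac{\pi}{2} U - U^2$ has an unstable fixed point at $U = 0$ and a stable fixed point at $U = \pi/2 > 1$, so starting from a small positive $U(0)$ with a suitably small remainder $R$, $U$ grows monotonically and crosses $1$ in time of order $\sqrt{2}\log(1/U(0))$.

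For the initial data I pick a smooth odd profile $\phi \in C^\infty_c(-1,1)$ with $\phi'(0) = 1$ and appropriately normalized norms, and set $\zeta_0(x) := \delta^3 \phi(x/\delta^2)$ extended by zero on $\mathbb{T}$, for $\delta$ small. Scaling gives $U(0) = \delta$, $M_0 = 0$, $|C_0| \leq C\delta^4$, $\|\zeta_0\|_{W^{1,\infty}} \leq \delta$, and $\|\zeta_0\|_{H^1} \leq \delta^2$, so (\ref{bound-initial}) holds after absorbing constants into a redefinition of $\delta$. To control $R$, I use (\ref{equality-1}) and $|\eta_*'| \leq \pi/4$ to obtain $|P'(t)| \leq \tfrac{\sqrt{2}}{2} P(t)$ for $P(t) := \oint(\partial_x \zeta)^2 dx$, hence $P(t) \leq \delta^4 e^{\sqrt{2}t/2}$. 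On $[0, T]$ with $T := \sqrt{2}\log(1/\delta)$ this yields $P(t) \leq \delta^3$, and therefore $|R(t)| \leq C\delta^3$. A standard comparison argument for (\ref{prop-ODE}) then produces a time $t^* \in (0, T + O(1))$ with $U(t^*) = 1$. The local existence time $\tau_0$ from Theorem \ref{theorem-evol} exceeds $t^*$ for $\delta$ small, by continuous dependence on initial data near the trivial solution $\zeta \equiv 0$. Continuity of $t \mapsto \|\zeta(t,\cdot)\|_{W^{1,\infty}}$ with $\|\zeta(0,\cdot)\|_{W^{1,\infty}} \leq \delta < 1$ and $\|\zeta(t^*,\cdot)\|_{W^{1,\infty}} \geq U(t^*) = 1$ then yields $t_0 \in (0, t^*]$ with $\|\zeta(t_0,\cdot)\|_{W^{1,\infty}} = 1$ by the intermediate value theorem.

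The main obstacle is the rigorous derivation of the pointwise ODE (\ref{prop-ODE}) at the low regularity $\zeta \in H^1_{\rm per}(\mathbb{T}) \cap W^{1,\infty}(\mathbb{T})$. Since $\partial_x \zeta$ is only an $L^\infty$ function, one must show that along the stationary characteristic $x = 0$ the one-sided limit $\lim_{x \to 0^+} \partial_x \zeta(t,x)$ is well-defined and $C^1$ in $t$, which requires propagating one-sided smoothness of smooth initial data through the nonlocal structure of (\ref{evolution-nonlinear}) via the method of characteristics, as in \cite{MP-2021,Natali}. A secondary concern is verifying quantitatively that the existence time $\tau_0$ from Theorem \ref{theorem-evol} exceeds $T = \sqrt{2}\log(1/\delta)$ for $\delta$ small.
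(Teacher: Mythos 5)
Your skeleton is the same as the paper's: method of characteristics, the stationary characteristic at the peak $x=0$, a Riccati-type ODE for the one-sided slope there (your (ODE) is exactly the paper's (\ref{ivp-3}) in the limit $s\to 0^+$, written with the conserved quantities of Lemma \ref{lem-conserved} instead of the constraint (\ref{constraint-nonlinear})), and an intermediate-value/maximal-time conclusion. Where you genuinely diverge is the choice of unstable direction. The paper takes a \emph{negative} initial slope $V_0(0)\in(-\delta,-\tfrac{2}{\pi}|C_0|)$, discards the term $-V_0^2\le 0$, bounds $Z(t,0)\le C_0$ directly by the conservation law, and gets the one-sided inequality $2c_*V_0'\le \tfrac{\pi}{2}V_0+C_0$, whose solution diverges to $-\infty$; no control of the forcing over long times is needed and the gradient growth is unbounded. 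You instead push the slope in the \emph{positive} direction toward the stable Riccati equilibrium near $\pi/2$, which crosses the threshold only because $\pi/2>1$; this forces you to control the forcing $R(t)$ two-sidedly on a time horizon of length $\sim\sqrt{2}\log(1/\delta)$, which you do via the scaling $\zeta_0=\delta^3\phi(x/\delta^2)$ and the Gronwall bound on $\oint(\partial_x\zeta)^2\,dx$ from (\ref{equality-1}). That works for the statement as normalized (threshold $1<\pi/2$), but it is more delicate and would not give the unbounded gradient growth that the paper's inequality provides; also your bound $|R|\le C\delta^3$ is established on $[0,T]$ while $t^*\in(0,T+O(1))$, a minor mismatch that should be patched by extending the Gronwall estimate by the extra $O(1)$ time.

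The one step that does not hold as written is your justification that $\tau_0>t^*$ ``by continuous dependence on initial data near the trivial solution.'' Continuous dependence from Theorem \ref{theorem-evol} gives control only on fixed compact time intervals, whereas $t^*\sim\sqrt{2}\log(1/\delta)\to\infty$ as $\delta\to 0$, so this argument cannot guarantee existence up to $t^*$. The repair is the dichotomy the paper uses implicitly: either the solution persists past $t^*$, in which case your comparison argument and the intermediate value theorem give $t_0$ with $\|\zeta(t_0,\cdot)\|_{W^{1,\infty}}=1$; or the maximal existence time satisfies $\tau_0\le t^*$, in which case the $H^1_{\rm per}\cap W^{1,\infty}$ norm cannot remain bounded as $t\to\tau_0^-$, and since the $H^1_{\rm per}$ part is controlled (the mean is conserved and $\oint(\partial_x\zeta)^2dx$ grows at most exponentially by (\ref{equality-1})), it is $\|\zeta(t,\cdot)\|_{W^{1,\infty}}$ that must exceed $1$ before $\tau_0$, again yielding (\ref{bound-final}) by continuity from the initial bound (\ref{bound-initial}). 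With that replacement, and with the propagation of one-sided $C^1$ regularity at $x=0$ carried out via the coupled characteristic system (as the paper does by restricting $\zeta_0\in C^1(0,2\pi)$ and solving (\ref{ivp-1})--(\ref{ivp-3})), your route goes through.
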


\begin{proof}
	The proof follows by the method of characteristics which works for every local solution $\zeta \in C^0((-\tau_0,\tau_0),H^1_{\rm per}(\mathbb{T}) \cap W^{1,\infty}(\mathbb{T}))$ of the evolution equation (\ref{evolution-nonlinear}). Let $x = X(t,s)$ be the family of characteristic curves for $(t,s) \in (-\tau_0,\tau_0) \times (0,2\pi)$ obtained from 
\begin{equation}
\label{ivp-1}
\left\{ \begin{array}{l} 2 c_* \partial_t X(t,s) = -(c_*^2 - 2 \eta_*(X)) + 2 (\zeta(t,X) - \zeta(t,0) ), \\
X(0,s) = s. \end{array} \right.
\end{equation}
Since the vector field of the initial-value problem (\ref{ivp-1}) is Lipschitz for the local solution $\zeta \in C^0((-\tau_0,\tau_0),H^1_{\rm per}(\mathbb{T}) \cap W^{1,\infty}(\mathbb{T}))$, 
there is a unique solution for $X \in C^1((-\tau_0,\tau_0) \times (0,2\pi))$ 
such that $X(t,0) = 0$ and $X(t,2\pi) = 2 \pi$. Since $[0,2\pi]$ is the fundamental period of $\mathbb{T}$, we also get $\zeta(t,0) = \zeta(t,2\pi)$.
By solving the linear equation for $\partial_s X(t,s)$, we get 
$$
\partial_s X(t,s) = \exp\left( \frac{1}{c_*} \int_0^t \left[ \eta_*'(X(t',s)) + \partial_x \zeta(t',X(t',s)) \right] dt' \right),
$$
from which it follows that $\partial_s X(t,s) > 0$ for every $t \in (-\tau_0,\tau_0)$ and $s \in (0,2\pi)$. Hence the mapping  $[0,2\pi] \ni s \mapsto X(t,s) \in [0,2\pi]$ is a diffeomorphism for $t \in (-\tau_0,\tau_0)$.

To proceed further, we consider the restriction of $\zeta_0 \in H^1_{\rm per}(\mathbb{T}) \cap W^{1,\infty}(\mathbb{T}) \cap  C^1(0,2\pi)$. Along the family of characteristic curves, we can now define $Z(t,s) := \zeta(t,X(t,s))$ and $V(t,s) := \partial_x \zeta(t,X(t,s))$. By using the evolution equations (\ref{evolution-nonlinear}) and (\ref{evolution-derivative}) along the family of characteristic curves satisfying (\ref{ivp-1}), we obtain the following initial-value problems:
\begin{equation}
\label{ivp-2}
\left\{ \begin{array}{l} 2 c_* \partial_t Z(t,s) = -\frac{1}{\pi} \langle \eta_*', \zeta \rangle + \frac{1}{2} \Pi_0 \partial_x^{-1} \Pi_0 (\zeta + 2 (\partial_x \zeta)^2), \\
Z(0,s) = \zeta_0(s), \end{array} \right.
\end{equation}
and
\begin{equation}
\label{ivp-3}
\left\{ \begin{array}{l} 2 c_* \partial_t V(t,s) = -2\eta_*'(X) V - V^2 + \frac{1}{2} (Z(t,s) + Z(t,0)), \\
V(0,s) = \zeta_0'(s), \end{array} \right.
\end{equation}
where we have used the constraint (\ref{constraint-nonlinear}). 
If $\zeta_0 \in H^1_{\rm per}(\mathbb{T}) \cap W^{1,\infty}(\mathbb{T}) \cap  C^1(0,2\pi)$, then the solutions of the initial-value problems (\ref{ivp-1}), (\ref{ivp-2}) and (\ref{ivp-3}) are defined in the class of functions 
$$
\begin{cases} 
X \in C^1((-\tau_0,\tau_0) \times (0,2\pi)), \\ 
Z \in C^1((-\tau_0,\tau_0) \times (0,2\pi)), \\
V \in C^1((-\tau_0,\tau_0),C^0(0,2\pi)), 
\end{cases}
$$
respectively, with the bounded one-sided limits as $s \to 0^+$ and $s \to (2\pi)^-$. Due to the conservation law in (\ref{conserved}), we have 
$$
\zeta|_{x=0} + \frac{1}{\pi} \oint (\partial_x \zeta)^2 dx = C_0,
$$ 
from which $\zeta |_{x=0} \leq C_0$ with a time-independent positive 
constant $C_0$. Since $X(t,0) = 0$, we get $Z(t,0) \leq C$ so that the evolution problem (\ref{ivp-3}) in the limit $s \to 0^+$ yields 
for $V_0(t) := \lim\limits_{s \to 0^+} V(t,s)$:
\begin{align*}
2c_* V_0'(t) = \frac{\pi}{2} V_0(t) - V_0^2(t) + Z(t,0) \leq \frac{\pi}{2} V_0(t) + C_0.
\end{align*}
Iterating the inequality as 
\begin{align*}
2c_* \frac{d}{dt} e^{-\frac{\pi t}{4c_*}} V_0(t) \leq C_0  e^{-\frac{\pi t}{4c_*}}
\end{align*}
and integrating yields 
\begin{align*}
2c_* \left[ e^{-\frac{\pi t}{4c_*}} V_0(t) - V_0(0) \right] \leq \frac{4 c_* C_0}{\pi} \left[ 1 - e^{-\frac{\pi t}{4 c_*}} \right] \leq \frac{4 c_* C_0}{\pi}.
\end{align*}
This implies  
$$
V_0(t) \leq \left( V_0(0) + \frac{2}{\pi} C_0 \right) e^{\frac{\pi t}{4 c_*}}.
$$
If the initial bound (\ref{bound-initial}) is true, we get by Sobolev embeddings of $H^1_{\rm per}(\mathbb{T})$ to $L^{\infty}(\mathbb{T})$ that 
$$
|C_0| \leq \delta^2 + \frac{1}{\pi} \delta^2 \leq 2 \delta^2,
$$ 
so that the interval $(-\delta,-\frac{2}{\pi} |C_0|)$ is nonempty for all sufficiently small $\delta > 0$. Selecting $-\delta < V_0(0) < -\frac{2}{\pi} |C_0|$ to satisfy the initial bound (\ref{bound-initial}) 
and to ensure that $V_0(0) + \frac{2}{\pi} C_0 < 0$ yields the exponential divergence $V_0(t) \to -\infty$ as $t \to +\infty$. Since $\tau_0 > 0$ is the maximal existence time in $H^1_{\rm per}(\mathbb{T}) \cap W^{1,\infty}(\mathbb{T})$ norm, there exists $t_0 \in (0,\tau_0)$ 
such that the instability bound (\ref{bound-final}) holds. 
\end{proof}

\begin{remark}
	By incorporating the quadratic term in the bound 
	$$
	2c_* V_0'(t) = \frac{\pi}{2} V_0(t) - V_0^2(t) + Z(t,0) \leq \frac{\pi}{2} V_0(t) - V_0^2(t) + C_0,
	$$
	one can find initial data  $\zeta_0 \in H^1_{\rm per}(\mathbb{T}) \cap W^{1,\infty}(\mathbb{T})$ for which $\| \zeta(t,\cdot) \|_{W^{1,\infty}}$ diverges in a finite time, see \cite{MP-2021} for a similar analysis of the Camassa--Holm equation. However, we do not have the bound on $\| \zeta(t,\cdot) \|_{H^1_{\rm per}}$ compared to the case of the Camassa--Holm equation, where the $H^1_{\rm per}$ norm of the perturbation does not grow, see \cite{Len4,Len5,MP-2021}.
\end{remark}

\section{Numerical approximations}
\label{sec-7}

We approximate numerically the smooth profile $\eta \in C_{\rm per}^{\infty}(\mathbb{T})$ of the traveling waves from the second-order equation (\ref{TW-eq}) and the eigenvalues of the Hessian operator 
$\mathcal L : H_\mathrm{per}^2(\mathbb T) \subset L^2 (\mathbb T)\to L^2(\mathbb T)$ which defines the linearized time evolution (\ref{linear-local}) for the smooth traveling waves. In both cases, 
we are interested to understand the convergence of numerical results to the peaked traveling wave
with the profile $\eta_*  \in C^0_{\rm per}(\mathbb{T}) \cap W^{1,\infty}(\mathbb{T})$ as $c \to c_*$. We show 
that the lowest eigenvalue of $\mathcal{L}$ diverges as $c \to c_*$, 
which suggests that the linearized equation (\ref{linear-local}) cannot be used for the peaked traveling wave. This explains why we had to derive a different linearized equation (\ref{linear-evolution}) for the peaked traveling wave.

To obtain the solutions $\eta \in C_{\rm per}^{\infty}(\mathbb{T})$ of the second-order equation (\ref{TW-eq}) for $c \in (1,c_*)$, we use the first-order invariant (\ref{TW-inv}) and define $\eta$ from the boundary-value problem:
\begin{equation}\label{bvp}
\left\{
\begin{array}{ll}
\displaystyle \left (\frac{d\eta}{dx} \right )^2=\frac{2\mathcal E-\eta^2}{c^2-2\eta}, \\[7pt]
\eta(\pm\pi)= -\sqrt{2\mathcal E}.
\end{array}
\right.
\end{equation}
Since $\eta(-x) = \eta(x)$, we take the negative sign in the square root of (\ref{bvp}) for $x \in [0,\pi]$ and obtain the solution profile $\eta(x)$ for $x \in [0,\pi]$ by finding the root of the integral equation
\begin{equation}
\label{int-eq}
f(\eta(x))-x = 0 , \qquad f(\eta) :=
\int_{\eta/\sqrt{2\mathcal E}}^{1}\frac{\sqrt{c^2-2\sqrt{2\mathcal E}x}}{\sqrt{1-x^2}}\,dx, 
\end{equation}
for $\mathcal E \in (0,\mathcal E_c)$,
where $\mathcal E_c := \frac{c^4}{8}$ is the value separating smooth and cusped profiles.

To determine the value of $\mathcal E$ for each $c\in (1,c_*)$, we consider the period function $T(\mathcal E,c)$ studied in \cite{LP24}. The period function is represented by using the complete elliptic integral $E(\kappa)$ of the second kind with the elliptic modulus $\kappa \in (0,1)$, which is defined by $\mathcal{E}\in (0,\mathcal{E}_c)$ and $c \in (1,c_*)$ as follows: 
\begin{equation}\label{period}
T(\mathcal E,c)=
4E(\kappa)\sqrt{c^2+2\sqrt{2\mathcal E}}, \quad 
\kappa = \sqrt{\frac{4\sqrt{2\mathcal E}}{c^2+2\sqrt{2\mathcal E}}}.
\end{equation}
The periodic profile $\eta \in C_{\rm per}^{\infty}(\mathbb{T})$ corresponds to the value of $\mathcal{E} = \mathcal{E}(c)$ found from the root of $T(\mathcal{E}(c),c) = 2\pi$.

To approximate the solution profile numerically, we let $x_j = jh,j\in \{0,\dots ,N\}$ be a fixed grid with the step size $h=\pi/N$ for a large integer $N$. By the fundamental theorem of calculus, the solution profile $\{(x_j,\eta_j)\}_{j=0}^{N}$ can be found by solving $f(\eta_j)-x_j=0$ for every $j$. We implement the Newton–Raphson's method as the root-finding algorithm under a specific tolerance $\epsilon>0$, such that
\begin{equation}\label{newton}
\eta_j^{(k+1)} = \eta_j^{(k)} - \frac{1}{f'\left (\eta_j^{(k)}\right )} \left [ f\left (\eta_j^{(k)}\right )-u_{j}\right ], \quad \quad \left  | f\left (\eta_j^{(k)}\right )-u_{j} \right |< \epsilon,
\end{equation}
where $k\in \mathbb N$ denotes iteration number, and we take $\eta_0^{(k)}(0)=\sqrt{2\mathcal E}$ and $\eta_N^{(k)}(\pi)= -\sqrt{2\mathcal E}$ as two boundary grid points for any $k$ to avoid the singularities. Given a $c$-grid $\{c_i\}_{i=1}^M$ with $M$ grid points, 
we compute  $\mathcal{E}_i:=\mathcal E(c_i)$ from the period function (\ref{period}) by solving numerically $T(\mathcal{E}_i,c_i) = 2\pi$. This outputs the sets of parameter pairs $\{(c_i, \mathcal E_i)\}_{i=1}^M$, which can be used in the root-finding algorithm (\ref{newton}). After the solution $\{(x_j,\eta_j)\}_{j=0}^{N}$ is obtained on $[0,\pi]$ for $N+1$ grid points, the even reflection fills all $2N+1$ grid points on $[-\pi,\pi]$ domain. The solution points $\{(x_j,\eta_j)\}_{j=-N}^{N}$ are plotted in Figure \ref{fig-1} (left) and the parameter pairs $\{(c_i, \mathcal E_i)\}_{i=1}^M$ are plotted in Figure \ref{fig-1} (right). 

\begin{figure}[htp!]
	\centering
	\includegraphics[width=0.75\textwidth]{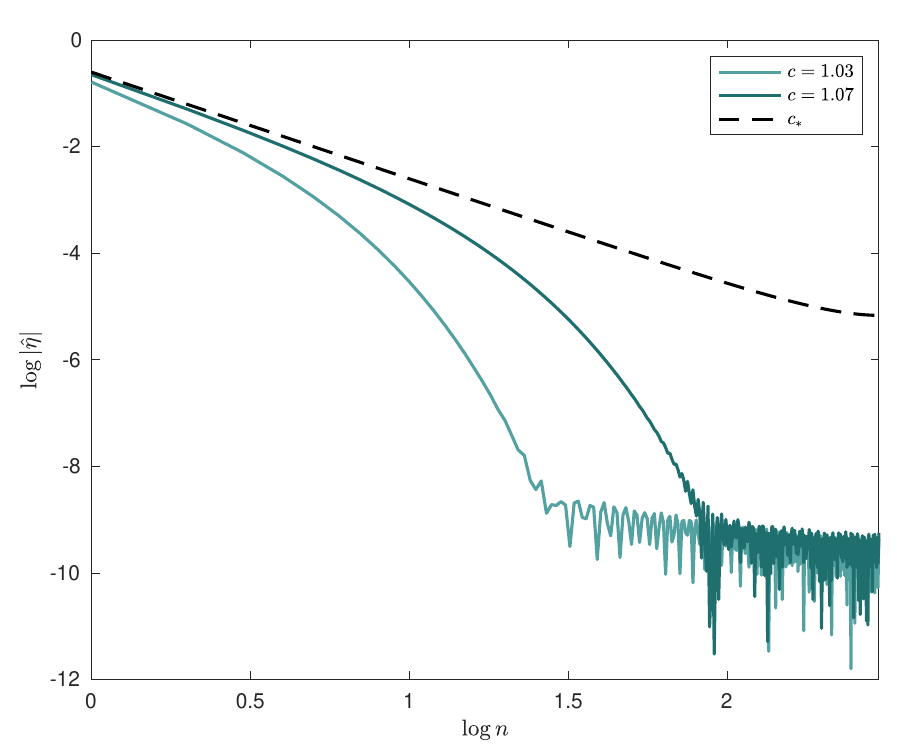} 
	\caption{The solution profiles $\hat{\eta}$ in Fourier space (\ref{eta_n}) in log-log coordinates for $c = 1.03,1.07$ with $N=300$ grid points and $\epsilon = 10^{-14}$ tolerance. The black dashed line represents the peaked profile $\eta_*$ for $c = c_*$.}
	\label{fig-2}
\end{figure}

The solution $\{(x_j,\eta_j)\}_{j=-N}^{N}$ can be represented in Fourier space by using the discrete Fourier transform (DFT) with $2N+1$ modes:
\begin{equation}\label{eta_n}
\hat \eta_n = \frac{h}{2\pi} \sum_{j=-N}^{N-1}\eta_j e^{-in x_j},\quad \quad n \in \{-N,\dots, N\}
\end{equation}
where one of the end point $x_N=\pi$ in the physical space is removed due to  the $2\pi$-periodicity. For the peaked wave profile $\eta_*$ at $c=c_*$, the solution (\ref{quadratic}) can be represented as the Fourier cosine series
\begin{equation}\label{quadratic-Fourier}
\eta_* (x)= -\frac{\pi^2}{48}+\sum_{m=1}^{\infty}\frac{\cos(mx)}{2m^2}.
\end{equation}
By applying DFT on the selected grid, the solution points $\{(n,\hat{\eta}_n)\}_{n=0}^{N}$ are plotted for the smooth profiles in Figure \ref{fig-2}. The black dashed line represents the Fourier series (\ref{quadratic-Fourier}) fot the peaked profile. We note that the fast convergence of the Fourier transform for the smooth waves is replaced by the slow convergence $\mathcal{O}(m^{-2})$ of the Fourier transform for the peaked waves.

Next we study numerically the spectrum of the Hessian operator 
$\mathcal{L}: H^2_{\rm per}(\mathbb{T}) \subset L^2(\mathbb{T}) \to L^2(\mathbb{T})$ which appears in the linearized equation (\ref{linear-local}). The spectrum of $\mathcal L$ can be computed by solving the spectral problem with the $2\pi$-periodic conditions, 
\begin{equation}\label{eigvalue-prob}
\mathcal L \gamma = \lambda \gamma, \quad \gamma \in H^2_{\rm per}(\mathbb{T}).
\end{equation}
We will apply two numerical methods (the finite--difference method and the Fourier collocation method) to solve the spectral problem (\ref{eigvalue-prob}). We write $\mathcal L =\mathcal M + \mathcal W$ with  $\mathcal M = -\partial_x (c^2-2\eta)\partial_x$ and $\mathcal W= (2\eta'' -1)$.

For the finite difference method, using the numerical approximation of the solution profile $\{(x_j,\eta_j)\}_{j=-N}^N$ and the central difference approximation of the second derivative $\mathcal M$, we construct the differentiation matrix for $\mathcal L$ acting on the eigenvector 
$\gamma = (\gamma_{-N},\dots , \gamma_{N-1})\in\mathbb R^{2N}$, 
{\small \begin{equation*}
\mathcal L = \begin{bmatrix}
\mathcal (W^0 +\mathcal  M^0) (\eta_{-N})  & \mathcal M^{+1}(\eta_{-N}) & 0 & \cdots & \mathcal M^{-1}(\eta_N) \\
\mathcal M^{-1}(\eta_{-N+1}) & (\mathcal W^0 +\mathcal  M^0)(\eta_{-N+1}) & \mathcal M^{+1}(\eta_{-N+1})  & \cdots & 0 \\
0 & \mathcal M^{-1}(\eta_{-N+2}) & \mathcal (W^0 +\mathcal  M^0) (\eta_{-N+2})  &  \cdots & 0 \\
\vdots & \vdots & \vdots & \ddots & \vdots \\
\mathcal M^{+1}(\eta_{N-1}) & 0 & 0 & \cdots & (\mathcal W^0 +\mathcal  M^0)(\eta_{N-1}) 
\end{bmatrix}, 
\end{equation*}
}where the boundary point $x_N = \pi$ is removed due to the $2\pi$-periodicity. The diagonal elements $\mathcal{M}^0(\eta_j)$, $\mathcal W^0(\eta_j)$ and the off-diagonal elements $\mathcal M^{\pm 1}(\eta_j)$ for $j\in \{-N,\dots ,N-1\}$ can be written as
\begin{equation*}
\mathcal M^{0}(\eta_j)= \tfrac{2c^2-2\eta_j-\eta_{j+1}-\eta_{j-1} }{h^2}, \quad 
\mathcal M^{\pm 1}(\eta_j)= -\tfrac{c^2-\eta_j-\eta_{j\pm1} }{h^2},
\end{equation*}
and 
\begin{equation*}
\mathcal W^0(\eta_j) =  
	\frac{4\mathcal E + 2\eta_j^2 - 2c^2\eta_j}{\left (c^2-2\eta_j \right )^2}-1
\end{equation*}
for $c\in (1,c_*)$, where the differential equations (\ref{TW-eq}) and (\ref{TW-inv}) have been used for $\eta''(x)$. By numerically solving the eigenvalue problem (\ref{eigvalue-prob}), we obtain the first four eigenfunctions $\gamma$ plotted in Figure \ref{fig-3}. The eigenfunctions display spikes near $x = 0$ in the  limit of $c \to c_*$.

\begin{figure}[htp!]
	\centering
	\includegraphics[width=0.9\textwidth]{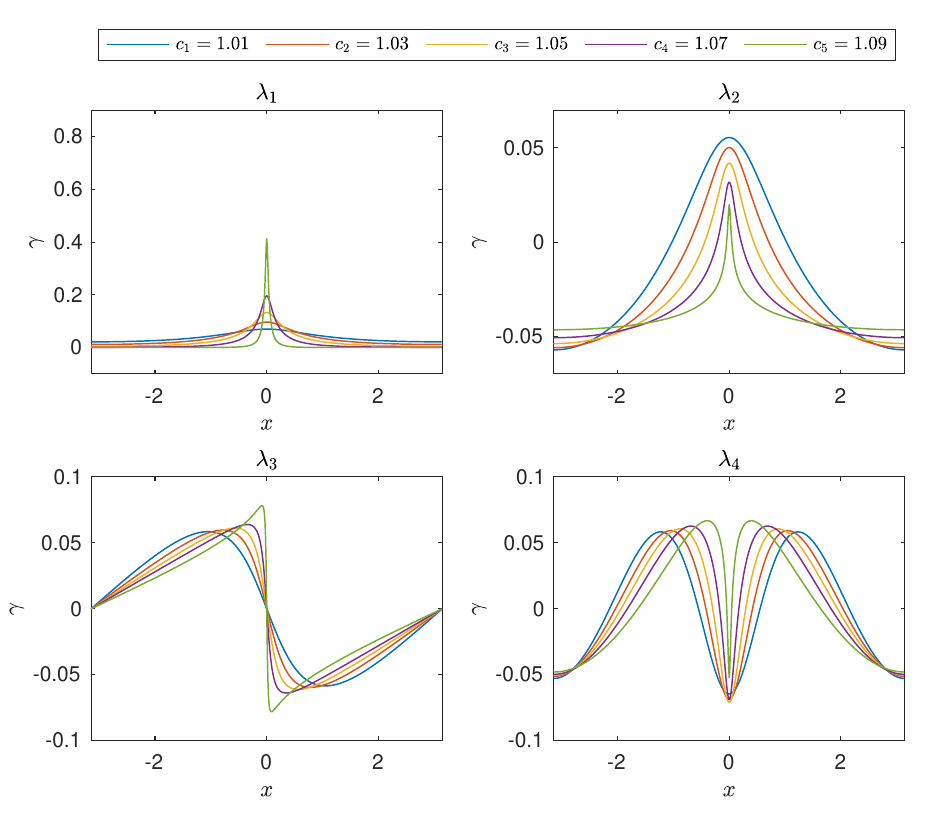} 
	\caption{Eigenfunctions corresponding to the first four eigenvalues for five values of $c$ in $(1,c_*)$. The grid in physical space is chosen to be $N=300$. The solution profiles obtained from equation (\ref{newton}) are used for diagonalization, and all eigenfunctions are plotted on $[-\pi,\pi]$ with positive slope near $-\pi$. }
	\label{fig-3}
\end{figure}

For the Fourier collocation method, we use the discrete Fourier transform (\ref{eta_n}) and represent $\gamma$ in the spectral problem (\ref{eigvalue-prob}) by $\hat \gamma = (\hat \gamma_{-N}, \dots, \hat \gamma_{N})\in \mathbb R^{2N+1}$. Since the $L^2$-isomorphism between physical and Fourier space, the eigenvalue problem in Fourier space $\widehat {\mathcal L} \hat \gamma = \lambda \hat \gamma$ shares the same eigenvalues with the physical space. We write again $\widehat {\mathcal L}= \widehat{\mathcal M} + \widehat{\mathcal W}$ with 
\begin{equation*}
\widehat{\mathcal M}=2(\mathcal D_1\hat \eta) \mathcal D_1-c^2\mathcal D_2+ 2\hat \eta \mathcal D_2, \qquad
\widehat{\mathcal W} =
2\mathcal (D_2\hat \eta) - \pi I 
\end{equation*}
for $c \in (1,c_*)$, where the first and second derivative are represented by  
\begin{equation*}
\mathcal D_1 = i\,\mathrm{diag}(-N,\dots, N),\quad \quad \mathcal D_2 = \mathcal D_1^2
\end{equation*}
and $\hat{\eta}$ is the Toeplitz matrix for convolution with the Fourier modes for $m \in \{-N,\dots, N\}$, 
\begin{equation*}
\hat \eta = \begin{bmatrix}
\hat \eta_0 &  \cdots & \hat \eta_{-N} & \cdots & 0 \\
\vdots &  \ddots & \vdots  &  \ddots &  \vdots  \\
\hat \eta_N  &  \cdots & \hat \eta_0 &  \cdots & \hat \eta_{-N} \\
\vdots &  \ddots & \vdots &  \ddots & \vdots \\
0  & \cdots & \hat \eta_N &  \cdots & \hat \eta_0
\end{bmatrix}.
\end{equation*}
By numerically solving the eigenvalue problem (\ref{eigvalue-prob}) in the Fourier space, we obtain the first four eigenfunctions $\gamma$ plotted in Figure \ref{fig-4}. Convergence of eigenfunctions in Fourier space for large $m$ becomes worse as $c \to c_*$. 

\begin{figure}[htb!]
	\centering
	\includegraphics[width=0.9\textwidth]{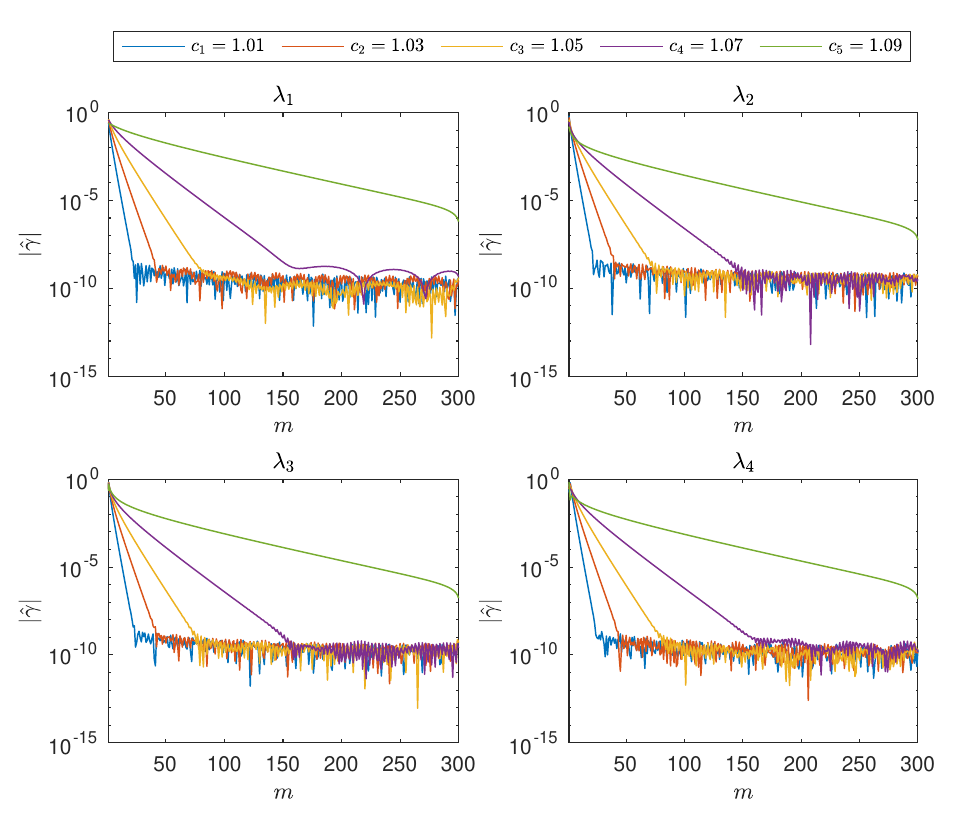} 
	\caption{The absolute value of eigenfunctions corresponding to the first four eigenvalues in Fourier space is plotted versus $m\in \{1,\dots, N\}$ for five values of $c$ in $(1,c_*)$. The grid in physical space is chosen to be $N=300$, and the solution profiles $\hat \eta$ are obtained from equations (\ref{newton}) and (\ref{eta_n}).}
	\label{fig-4}
\end{figure}

Figure \ref{fig-5} shows the first four eigenvalues obtained by the finite-difference method (left) and the Fourier collocation method (right). The lowest eigenvalue diverges to $-\infty$ as $c \to c_*$. After $\lambda_1$, the even-numbered eigenvalues $\lambda_2, \lambda_4, \dots$ correspond to eigenfunctions of even parity in $x$, whereas the odd-numbered eigenvalues $\lambda_3,\lambda_5,\dots$ correspond to eigenfunctions of odd parity.
Convergence of eigenvalues as $c \to c_*$ is low in both physical and Fourier space and, in particular, their values do not converge well to eigenvalues computed for the limiting wave with the peaked profile $\eta_*$ at $c = c_*$. 
The grey shaded region highlights the loss of precision in the numerical approximations of eigenvalues with poor convergence to the eigenvalues of the limiting peaked wave.

\begin{figure}[htp!]
	\centering
	\includegraphics[width=0.9\textwidth]{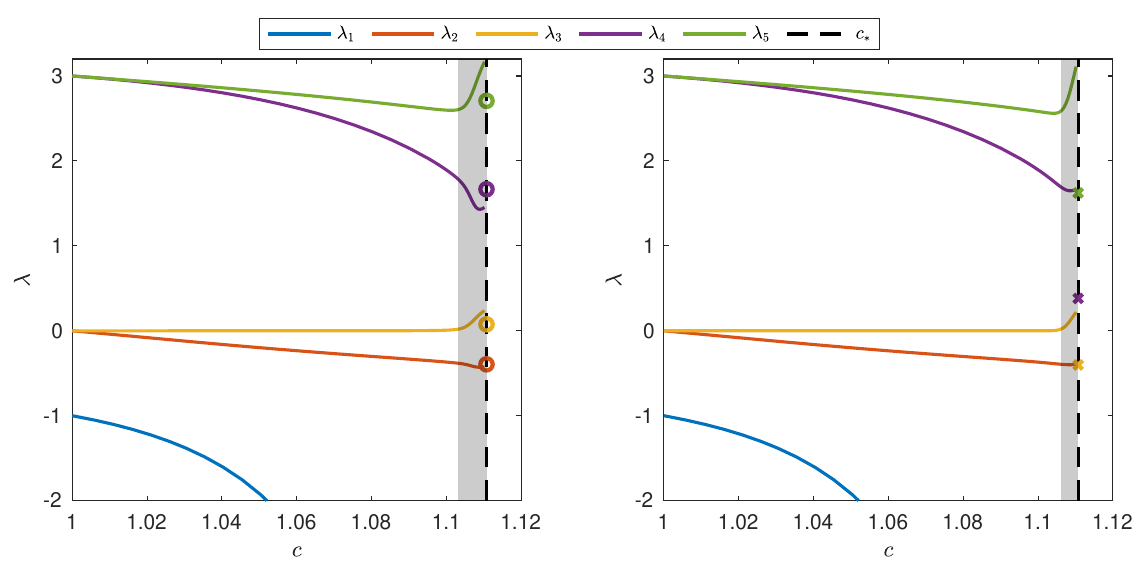} 
	\caption{The dependence of the first four eigenvalues of the spectral problem (\ref{eigvalue-prob}) is plotted versus $c$ for $c \in (1,c_*)$ obtained with the finite-difference method (left) and with the Fourier collocation method (right). Eigenvalues computed for the peaked profile with $c = c_*$ are marked as circles \& crosses. }
	\label{fig-5}
\end{figure}

Eigenvalues for the limiting wave with the peaked profile $\eta_*$ at $c = c_*$ are computed as follows. For the finite-difference method, we use 
\begin{equation*}
c = c_* : \quad \mathcal{W}^0(\eta_j) = -\frac{1}{2} - \pi (\delta_0)_j 
\end{equation*}
where the Dirac delta distribution is approximated by the Gaussian pulse with a small parameter $\alpha = \pi/N$ as 
$$
\delta_0(x) \approx \frac{1}{\sqrt{\pi \alpha^2}}e^{-x^2/\alpha^2}, \quad x \in \mathbb{T}.
$$
For the Fourier collocation method, we use 
$$
c = c_* : \quad \widehat{\mathcal W} = -\frac{1}{2} I - \pi \mathbb I,
$$
where $\mathbb{I}$ is the Toeplitz matrix of unity:
\begin{equation*}
\mathbb I = \begin{bmatrix}
1 &  \cdots & 1& \cdots & 0 \\
\vdots &  \ddots & \vdots  &  \ddots &  \vdots  \\
1 &  \cdots & 1&  \cdots & 1 \\
\vdots &  \ddots & \vdots &  \ddots & \vdots \\
0  & \cdots & 1 &  \cdots & 1
\end{bmatrix}.
\end{equation*}

To illustrate further the low convergence of eigenvalues as $c \to c_*$, we plot the dependence of the third eigenvalue (which is  theoretically zero, see \cite{LP24}) versus $c$ in Figure \ref{fig-6} (left). For computations with different methods and for different $N$, we observe the growth $|\lambda_3|$, which is a numerical way to detect the loss of accuracy of numerical computations. Similarly, Figure \ref{fig-6} (right) shows the difference between eigenvalues computed in the two numerical methods versus $c$. The difference grows as $c \to c_*$ due to low accuracy in each numerical method.

\begin{figure}[htp!]
	\centering
	\includegraphics[width=0.9\textwidth]{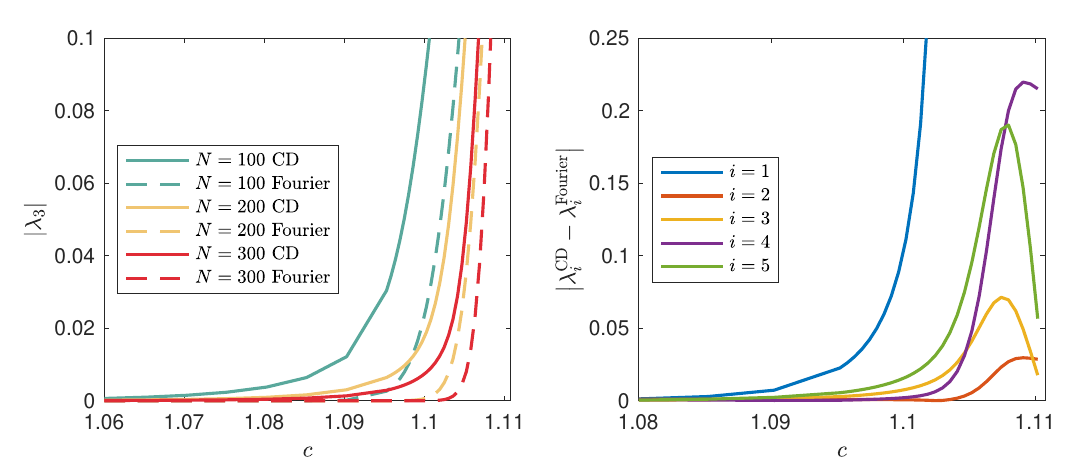} 
	\caption{(a) The third eigenvalue $|\lambda_3|$ plotted versus $c$ to show its departure from $0$ as $c\to c_*^-$ for different grids $N=100,200,300$ by the finite difference (CD) and Fourier collocations (Fourier) methods. (b) Errors between each eigenvalue computed in the two numeical methods versus $c$.}
	\label{fig-6}
\end{figure}

We conclude that the spectral stability problem for the traveling wave with the peaked profile $\eta_* \in C^0_{\rm per}(\mathbb{T}) \cap W^{1,\infty}(\mathbb{T})$ cannot be analyzed by working with the spectral stability problem for the family of traveling waves with the smooth profiles 
$\eta \in C_{\rm per}^{\infty}(\mathbb{T})$ in the limit $c \to c_*$. The lack of convergence is fundamental, both at the levels of functional analysis and 
numerical approximations.

\end{document}